\numberwithin{equation}{section}
\DeclareMathOperator*{\esssup}{ess\,sup}
\newcommand {\average}[1] {\mbox{$\left\{\!\!\left\{ #1 \right\}\!\!\right\}$}}
\newcommand {\jump}[1] {\mbox{$\left[\!\left[ #1 \right]\!\right]$}}
\newcommand{\set}[2]{\left\{{#1}\,:~{#2}\right\}}
\newtheorem{theorem}{Theorem}[section]
\newtheorem{remark}[theorem]{Remark}
\newtheorem{assumption}{Assumption}
\newcommand{\DESCRIPTION@original@item}{}
\let\DESCRIPTION@original@item\item
\newcommand*{\DESCRIPTION@envir}{DESCRIPTION}
\newlength{\DESCRIPTION@totalleftmargin}
\newlength{\DESCRIPTION@linewidth}
\newcommand{\DESCRIPTION@makelabel}[1]{\llap{#1}}%
\newcommand{\DESCRIPTION@item}[1][]{%
	\setlength{\@totalleftmargin}%
	{\DESCRIPTION@totalleftmargin+\widthof{\textbf{#1 }}-\leftmargin}%
	\setlength{\linewidth}
	{\DESCRIPTION@linewidth-\widthof{\textbf{#1 }}+\leftmargin}%
	\par\parshape \@ne \@totalleftmargin \linewidth
	\DESCRIPTION@original@item[\textbf{#1}]%
}
\journal{Journal of \LaTeX\ Templates}
\begin{document}

\begin{frontmatter}



\title{An Adaptive Algorithm Based on Stochastic Discontinuous Galerkin for Convection Dominated Equations with Random Data}

\author[TUchem]{Pelin \c{C}\.{i}lo\u{g}lu}
\ead{pelin.ciloglu@mathematik.tu-chemnitz.de}

\author[iam]{Hamdullah Y\"ucel\corref{cor1}}
\ead{yucelh@metu.edu.tr}

\cortext[cor1]{Corresponding author}

\address[TUchem]{Department of Mathematics, Technische Universit\"{a}t Chemnitz, 09126, Chemnitz, Germany}
\address[iam]{Institute of Applied Mathematics, Middle East Technical University, 06800 Ankara, Turkey}

\begin{abstract}
In this paper, we propose an adaptive approach, based on mesh refinement or parametric enrichment with polynomial degree adaption, for numerical solution of convection dominated equations with random input data. A parametric system emerged from an application of stochastic Galerkin approach is discretized by using  symmetric interior penalty Galerkin (SIPG) method with upwinding for the convection term in the spatial domain. We derive a residual-based error estimator contributed by the error due to the SIPG discretization,  the (generalized) polynomial chaos discretization in the stochastic space, and data oscillations. Then, the reliability of the proposed error estimator, an upper bound for the energy error up to a multiplicative constant, is shown. Moreover, to balance the errors stemmed from spatial and stochastic spaces, the truncation error emerged from Karhunen--Lo\`{e}ve expansion  are  considered in the numerical simulations. Last, several benchmark  examples including a random diffusivity parameter, a random convectivity parameter, random diffusivity/convectivity parameters, and a random (jump) discontinuous diffusivity parameter, are tested to illustrate the performance of the proposed estimator.        
\end{abstract}



\begin{keyword}
Stochastic Galerkin method,  discontinuous Galerkin, PDEs with random data, convection diffusion,   a posteriori error analysis
\MSC[2010] 35R60\sep  65C20\sep 65N15\sep 65N30 \sep 65N50

\end{keyword}

\end{frontmatter}



\section{Introduction} \label{sec:intro}

With the improvement of computer--processing capacities, the demand for efficient numerical simulations of partial differential equations (PDEs) with uncertainty or parameter--dependent inputs, which are widely used  in many fields in science and engineering, see, e.g., \cite{JPDelhomme_1979,SMarkov_BCheng_AAsenov_2012,DZhang_QKang_2004}, has begun to grow. Uncertainty generally arises from the lack of knowledge (called as epistemic uncertainty) or the nature variability in the system (called as aleatoric uncertainty). While modelling the system, uncertainty can be characterized by the random fields in the coefficients, the boundary conditions, the initial conditions, or the geometry of the underlying PDE. 

While solving a forward uncertainty quantification problem, the aim is, in general, to determine the effect of uncertainties in the input on the solution of the underlying problem or to investigate the numerical behaviour of the statistical moments of the solution, such as mean and variance. In the last decade, there exist a vast of studies on the elliptic  equations with random data, for instance, based on Monte Carlo (MC) and its variants, such as multilevel or quasi MC methods \cite{ABarth_CSchwab_NZollinger_2011,KACliffe_MBGiles_RScheichl_ALTeckentrup_2011,IGGraham_FYKuo_DNuyens_RScheichl_IHSloan_2011}, stochastic collocation (SC) \cite{IBabuska_FNobile_RTempone_2007a,FNobile_RTempone_CGWebster_2008a}, perturbation methods \cite{IBabuska_PChatzipantelidis_2002a,MKleiber_TDHien_1992}, and stochastic Galerkin (SG) \cite{IBabuska_RTempone_GEZouraris_2004a,CSchwab_CJGittelson_2011a}. The stochastic Galerkin method, in contrast to the MC and SC methods, is a nonsampling method and is based on the orthogonality of the residual to the polynomial space as done in the deterministic setting. In addition,  the SG method allows the spatial space to be separated from the stochastic space. We can therefore adapt existing well-known numerical techniques such as a posteriori error analysis or adaptive refinement  in the deterministic setting. Hence, with the help of judiciously chosen adaptive approaches in both spatial and (stochastic) parametric domains, one can avoid a fast growth of the dimension of tensor basis consisting of discontinuous finite element basis functions in the spatial domain and (generalized) polynomial chaos polynomials in the stochastic space.

Design and theoretical analysis of adaptive finite element methods  pioneered by the work of Babu\v{s}ka and Rheinbold  \cite{IBabuska_WCRheinboldt_1978a} have become a popular approach for the efficient solution  of deterministic PDEs \cite{MAinsworth_JTOden_2000a,RVerfurth_1996} as well as PDEs with random data. Within the SG method setting for elliptic PDEs  containing uncertainty, several adaptive strategies based on, for instance, implicit error estimators \cite{XWan_GEKarniadakis_2009}, goal--oriented  a posteriori error estimates \cite{ABespalov_DPraetorius_LRocchi_MGuggeri_2019,LMathelin_OLeMaitre_2007}, multilevel goal--oriented adaptive approaches  \cite{ABespalov_DPraetorius_MGuggeri_2022}, local equilibrium error estimates \cite{MEigel_CMerdon_2016}, hierarchal error estimates \cite{ABespalov_LRocchi_2018,ABespalov_DSilvester_2016,ABespalov_DPraetorius_LRocchi_MGuggeri_2019b,ABespalov_DPraetorius_LRocchi_MGuggeri_2019,AJCrowder_CEPowell_ABespalov_2019}, and residual--based error estimators \cite{MEigel_CJGittelson_CSchwab_EZander_2014,MEigel_CJGittelson_CSchwab_EZander_2015,MEigel_MMarschall_MPfeffer_RSchneider_2020,CJGittelson_2011}, are used to enhance the computed solution and drive the convergence of approximations. Unlike the aforementioned works, here we will focus on more challengeable model problem, namely, convection diffusion equations (especially convection dominated PDEs) containing random coefficients, and to the best of our knowledge, there is no work on the convection dominated equations containing random data in the context of adaptive methods.


In this paper, we mainly aim to construct an adaptive loop,  which consists of successive loops of the following sequence:
\begin{align}\label{loop}
	\textbf{SOLVE   } \rightarrow \textbf{   ESTIMATE   } \rightarrow \textbf{   MARK   } \rightarrow \textbf{   REFINE}
\end{align}
for convection diffusion equations containing random coefficients.  In  addition to the randomness in the input data, such kinds of PDEs also exhibit boundary/interior layers, localized regions where the derivative of the solution changes extremely rapidly. Therefore, we need here an efficient adaptive mechanism, which produce an accurate estimation of the error, while not increasing computational effort. The \textbf{SOLVE} step in the loop~\eqref{loop} stands for the numerical approximation of the statistical moments obtained by the numerical schemes in the current mesh and index set. Here, we use symmetric interior penalty Galerkin (SIPG) method  with upwinding for the convection term to discretize the parametric system of deterministic convection diffusion equations obtained by an application of the stochastic Galerkin approach. In the \textbf{ESTIMATE} step,  error indicators are computed in terms of the discrete solution without knowledge of the exact solutions. They are crucial in designing algorithms to perform a balanced relation between the spatial  and  stochastic components of Galerkin approximations. Obtained total error estimator is contributed by the error due to the (generalized)  polynomial chaos discretization, the error due to the SIPG discretization of the (generalized)  polynomial chaos coefficients in the expansion, and the error due to the data oscillations. Moreover, adding the truncation error emerged from Karhunen--Lo\`{e}ve expansion,  we construct a good balance between the errors emerged from the spatial and stochastic  domains. Based on the information of the estimators in the \textbf{ESTIMATE} step, the \textbf{MARK} step selects a subset of elements subject to refinement and of parametric indices for enrichment. Last, the \textbf{REFINE} step either performs a local refinement of the current triangulation in the spatial domain or enriches the current index set of the parametric domain based on the dominant error estimator contributing to the total estimator. 

This paper is structured as follows. In Section~\ref{sec:prob}, the formulation of the model problem  is presented with the appropriate notation. Section~\ref{sec:sgd} provides an overview of the numerical schemes consisting of Karhunen--Lo\`{e}ve (KL) expansion, stochastic Galerkin method with the symmetric interior penalty Galerkin (SIPG) method for the spatial discretization. A reliable error estimator up to a multiplicative constant in the energy norm  is obtained in Section~\ref{sec:estimator}.  Adaptive loop and numerical results are presented in Section~\ref{sec:num}, and  we conclude with final remarks in Section~\ref{sec:conc}.



\section{Problem formulation} \label{sec:prob}

Let $\mathcal{D}$ be a bounded Lipschitz domain in $\mathbb{R}^2$ with polygonal boundary $\partial \mathcal{D}$, and  triplet $(\Omega, \mathcal{F}, \mathbb{P})$ consisting 
of the set of outcomes  $\Omega$, $\sigma$--algebra of events $\mathcal{F} \subset 2^{\Omega}$, and probability measure $\mathbb{P}: \mathcal{F} \rightarrow [0,1]$ be a complete 
probability space. A real--value square integrable random variable on the probability space $(\Omega, \mathcal{F}, \mathbb{P})$ for a fixed $\boldsymbol{x}\in \mathcal{D}$ represented by
\begin{equation*}
	z(\boldsymbol{x}, \cdot) \in L^2(\Omega) : = \{ z: \Omega \rightarrow \mathbb{R}\, : \; \int_{\Omega} |z(\omega)|^2 \, d\mathbb{P}(\omega) < \infty  \}
\end{equation*}
contains some statistical information such as expected value (i.e., mean) and covariance, respectively, defined by
\begin{subequations}\label{eqn:cov}
	\begin{eqnarray}
		\overline{z}(\boldsymbol{x})&:=& \int_{\Omega} z(\boldsymbol{x},\cdot) \, d\mathbb{P}(\omega) \qquad \quad \; \boldsymbol{x} \in \mathcal{D}, \\
		\mathcal{C}_{z}(\boldsymbol{x},\mathbf{y}) &:=& \int_{\Omega} (z(\boldsymbol{x},\cdot) - \mathbb{E}[ z ]) (z(\boldsymbol{y},\cdot) - \mathbb{E}[ z ] ) \, 
		d\mathbb{P}(\omega) \quad \;\;  \boldsymbol{x},\boldsymbol{y} \in \mathcal{D}. \label{eqn:covb}
	\end{eqnarray}
\end{subequations}
Variance and standard derivation of $z$ are also given by  $\mathcal{V}_{z} = \mathcal{C}_{z}(\boldsymbol{x},\boldsymbol{x})$ and 
$\kappa_{z}= \sqrt{\mathcal{V}_{z}}$, respectively. Moreover, for a given separable Hilbert space $H$ equipped with the norm $\| \cdot \|_H$ 
and seminorm $| \cdot |_H$,  we  introduce a Bochner--type space $L^p(\Omega; H)$ for a random variable $z: \Omega \rightarrow H$ as follows
\begin{equation*}
	L^p(\Omega;H): = \{z: \Omega \rightarrow H :  z \; \; \text{strongly measurable}, \;  \|z\|_{L^p(\Omega;H)} < \infty  \},
\end{equation*}
where
\[
\|z(\omega)\|_{L^p(\Omega;H)}  = \left\{
\begin{array}{ll}
	\left( \int_{\Omega} \|z(\omega)\|^p_H \, d\mathbb{P}(\omega) \right)^{1/p}, & \hbox{for  }  1 \leq p < \infty,\\
	\esssup \limits_{\omega \in \Omega} \|z(\omega)\|_H, & \hbox{for  } p = \infty.
\end{array}
\right.
\]
Further, the following  isomorphism relation 
\[
H(\mathcal{D}) \otimes L^p(\Omega) \simeq L^p(\Omega;H(\mathcal{D})) \simeq  H(\mathcal{D};L^p(\Omega))
\]
holds; see \cite{IBabuska_RTempone_GEZouraris_2004a} for more details.

In this paper, we study a convection diffusion equation containing randomness in the coefficients: Find 
$u: \overline{\mathcal{D}} \times \Omega \rightarrow \mathbb{R}$ such that the following equation holds for almost every $ \omega \in \Omega$
\begin{subequations}\label{eqn:m1}
	\begin{align}
		-\nabla \cdot ( a(\boldsymbol{x},\omega) \nabla u(\boldsymbol{x},\omega) ) + \mathbf{b}(\boldsymbol{x},\omega)\cdot \nabla u(\boldsymbol{x},\omega)  
		& =  f(\boldsymbol{x})  \quad  \hbox{   in} \;\; \mathcal{D} \times \Omega, \\
		u(\boldsymbol{x},\omega) & =  u^d(\boldsymbol{x})  \quad   \hbox{on} \;\; \partial \mathcal{D} \times \Omega,
	\end{align}
\end{subequations}
where  $a:(\mathcal{D} \times \Omega) \rightarrow \mathbb{R}$ and $\mathbf{b}:(\mathcal{D} \times \Omega) \rightarrow \mathbb{R}^2$ are  random diffusivity and convectivity 
coefficients. Source function and Dirichlet boundary condition denoted by  $f$ and $u^d$, 
respectively, are given in a  deterministic way. The well--posedness of the model problem \eqref{eqn:m1} can be shown by proceeding the classical Lax--Milgram 
Lemma, see, e.g., \cite{IBabuska_RTempone_GEZouraris_2004a}, under the following assumption.
\begin{assumption}\label{assump_random}
	For almost every  $\boldsymbol{x} \in  \mathcal{D}$, the diffusion coefficient satisfies
	\[ 
	0 < a_{min} \leq  a(\boldsymbol{x},\omega) \leq a_{max} < \infty,
	\] 
	where $a_{min} = \inf \limits_{\boldsymbol{x} \in \mathcal{D}} a(\boldsymbol{x},\omega)$ and $a_{max} = \sup \limits_{\boldsymbol{x} \in \mathcal{D}} a(\boldsymbol{x},\omega)$ 
	for all $\omega \in  \Omega$, with  $a(\cdot,\omega) \in C^{1}(\Omega)$. Moreover, for almost every $\omega \in \Omega$,  the convection term 
	$\mathbf{b}(\cdot, \omega) \in \big( L^{\infty}(\overline{\mathcal{D}}) \big)^2$  is incompressible, that is, $\nabla \cdot \mathbf{b}(\boldsymbol{x},\omega) =0$.
\end{assumption}

\begin{remark}
	In this study, only coefficients are assumed to be random  variables but it would not introduce extra difficulties in terms of the  implementation of proposed method to also consider the source function $f$ and Dirichlet boundary condition $u^d$ as random fields not correlated to the coefficients as long as appropriate integrability of the data holds. In addition, the underlying random coefficients can be modelled by a continuous part and a discontinuous part (e.g., jump term) as discussed in \cite{ABarth_AStein_2018,ABarth_AStein_2022,JLi_XWang_KZhang_2016}. In the numerical simulations, we test the performance of our proposed estimator on a benchmark example containing a random (jump) discontinuous coefficient, however  we refer to \cite{ABarth_AStein_2018,ABarth_AStein_2022} and references therein for more theoretical discussion.
\end{remark}


\section{Stochastic Galerkin discretization}\label{sec:sgd}

In this section, the infinite dimensional problem \eqref{eqn:m1} will be reduced to finite dimensional space by applying 
Karhunen--Lo\`{e}ve (KL) expansion and  stochastic discontinuous Galerkin (SDG) method. For the statement of the finite dimensional structure, we follow
the notation in \cite{PCiloglu_HYucel_2022,PCiloglu_HYucel_2023}.

To represent the random inputs, we use the well--known Karhunen--Lo\`{e}ve (KL) 
expansion \cite{KKarhunen_1947,MLoeve_1946}, which is characterized  for a random variable $z(\boldsymbol{x}, \omega)$ by
\begin{equation}\label{eqn:kl}
	z(\boldsymbol{x},\omega) = \overline{z}(\boldsymbol{x}) + \kappa_{z} \sum \limits_{k=1}^{\infty} \sqrt{\lambda_k}\phi_k(\boldsymbol{x})\xi_k(\omega),
\end{equation}
where $\xi:=\{\xi_1,\xi_2,\ldots\}$ are uncorrelated random  variables. The pair of eigenvalues and eigenfunctions, i.e.,  $\{\lambda_k,\phi_k\}$, is obtained by solving 
Fredholm equation with the kernel $\mathcal{C}_{z}$  given in \eqref{eqn:covb}. Eigenvalues have a descending order such as 
$\lambda_1 \geq  \ldots \geq  \lambda_k \geq \ldots >0$, while the eigenfunctions are mutually orthonormal  polynomials. 
In order to find approximations in the finite-setting for the statistical moments of the problem  \eqref{eqn:m1}, we  truncate the expansion \eqref{eqn:kl} as follows
\begin{equation}\label{eqn:kltrun}
	z(\boldsymbol{x},\omega) \approx z_{N}(\boldsymbol{x},\omega) := \overline{z}(\boldsymbol{x}) + \kappa_{z} \sum \limits_{k=1}^{N} \sqrt{\lambda_k}\phi_k(\boldsymbol{x})\xi_k(\omega),
\end{equation}
where the truncation number $N$  depends on the decay of the eigenvalues $\lambda_i$ 
since $\mathcal{V}_{z}= |\mathcal{D}|^{-1} \sum \limits_{i=1}^{\infty} \lambda_i$; see, e.g., \cite{OGErnst_CEPowell_DJSilvester_EUllmann_2009a}. Then, the 
truncation error resulting from the KL--expansion is equivalent to
\begin{equation}\label{eig_bound}
	\|z - z_{N} \|_{L^2(\Omega;L^{2}(\mathcal{D}))} = \left( \sum \limits_{i=N+1}^{\infty} \lambda_i \right)^{1/2}.
\end{equation}
The positivity of the truncated diffusivity coefficient $a_N(\boldsymbol{x},\omega)$ can be ensured by applying a 
similar  condition as given in Assumption~\ref{assump_random} 
\begin{equation}\label{assumption_truncatedD}
	\exists \, a_{min}, \, a_{max} >0, \quad \text{s.t.} \quad a_{min} \leq  a_{N}(\boldsymbol{x},\omega) \leq a_{max}
\end{equation}
for almost every  $(\boldsymbol{x},\omega) \in  \mathcal{D} \times \Omega$; see also \cite{CEPowell_HCElman_2009} for a detailed discussion.


Then, based on the finite dimensional noise \cite[Assumption~2.1]{IBabuska_RTempone_GEZouraris_2004a}  and Doob--Dynkin Lemma \cite{BOksendal_2003}, we seek for the 
solution $u(\boldsymbol{x}, \omega) = u(\boldsymbol{x}, \xi_1(\omega), \xi_2(\omega), \ldots, \xi_{N}(\omega))$ on the probability space 
$(\Gamma, \mathcal{B}(\Gamma), \rho(\xi)d\xi)$, where $\Gamma= \prod \limits_{n=1}^{N} \Gamma_n$ is the support of probability density in the
finite dimensional space, $\mathcal{B}(\Gamma)$ is a Borel $\sigma$--algebra, and $\rho(\xi)d\xi$  represents the distribution measure of the vector $\xi$.
To construct an orthonormal basis space in  $L^2(\Gamma)$, we first introduce the 
set of finitely supported  multi--indices (or called as index set) \cite{ABespalov_CEPowell_DSilvester_2014,MEigel_CJGittelson_CSchwab_EZander_2014}
\[
\mathfrak{U} = \{ q = (q_1,q_2, \ldots) \in \mathbb{N}\; : \; | \text{supp} \,q| < \infty\},
\]
where $\text{supp}\,q = \{ n \in \mathbb{N} \; | \; q_n \neq 0\}$ and $|q| := \sum \limits_{i \in \text{supp}\, q} q_i$. Let $\{ \psi_{n}^{m}  \}_{m \in \mathbb{N}_0}$ 
denote the set of univariate polynomials of degree $m$ on $ \Gamma_n$ that are orthogonal basis of the space $ L^2(\Gamma_n)$. Then, 
the countable set of tensor product polynomials $\Psi^q(\xi)$, i.e.,
\[
\Psi^q(\xi) = \prod \limits_{n \in \text{supp}\,q} \psi^{q_n}_n(\xi_n),   \qquad \hbox{for} \;\, q \in \mathfrak{U}, \; \xi \in \Gamma,
\]
forms  an orthonormal basis for the space $L^2(\Gamma)$ \cite[Theorem~9.55]{GJLord_CEPowell_TShardlow_2014}.  Now, for a given finite index set 
$\mathfrak{B} \subset \mathfrak{U}$,  we can set the finite dimensional subspace  $\mathcal{Y}^q$   
with  $\text{dim} (\mathcal{Y}^q) = \# \mathfrak{B}$ 
\begin{eqnarray}
	\mathcal{Y}^q := \text{span} \{ \Psi^q: \; q \in \mathfrak{B} \}  \subset L^2(\Gamma),
\end{eqnarray}
and a subspace  $\mathcal{Y}^{q_n}\subset \mathcal{Y}^q$ for $n=1,\ldots, N$ consisting of the  family of polynomials $\{\widetilde{\Psi}^q\}_{q \in \mathfrak{B}}$ is defined by
\[
\widetilde{\Psi}^q(\xi) = \prod \limits_{i=1}^N \widetilde{\psi}^{q_i}_i(\xi_i) , \quad \hbox{where} \quad    
\widetilde{\psi}^{q_i}_i(\xi_i) = \begin{cases}
	\widetilde{\psi}^{q_n}_n(\xi_n), & i = n,\\
	1, & i \neq n.
\end{cases}
\]
Hence, the discrete parametric (stochastic)
solution, i.e., $u(\boldsymbol{x},\xi) \in \mathcal{Y}^q$, obtained by a (generalized) polynomial chaos (PC) approximation \cite{DXiu_GEKarniadakis_2002} is represented by 
\begin{equation}\label{eq:pcekdv}
	u^{q}(\boldsymbol{x},\xi) = \sum_{q \in \mathfrak{B} \subset \mathfrak{U}}u^q(\boldsymbol{x}) \Psi^q(\xi), \\
\end{equation}
where the deterministic modes are obtained by
\[
u^q(\boldsymbol{x}) = \frac{ \int_{\Gamma} u^{q}(\boldsymbol{x},\xi) \Psi^q(\xi) \,  \rho(\xi)d\xi}{\int_{\Gamma} (\Psi^q)^2(\xi)  \,  \rho(\xi)d\xi}.
\]


\begin{remark}\label{sto_basis}
	In general,  the choice of orthogonal polynomials, i.e., $\Psi^q$,  depends on the type of random field distribution 
	since the probability density functions of random distributions are equivalent to the weight functions 
	of certain types of orthogonal polynomials. For example,  Hermite polynomials corresponds to Gaussian 
	random variables, whereas  Legendre polynomials are chosen for the uniform random 
	variables; see, e.g.,  \cite{RKoekoek_PALesky_2010} for a detailed discussion.
\end{remark}

Last, we discretize the spatial domain  $\mathcal{D}$ by using the discontinuous Galerkin (DG) finite element 
spaces. Let $\{ \mathcal{T}_h\}_h$ be a decomposition of $\mathcal{D}$  into shape regular triangles $K$, having
the diameter $h_K$ and the edge length $h_{E}$. The set of interior edges  
and of  boundary edges are denoted by $\mathcal{E}^0_h$ and  $\mathcal{E}^{\partial}_h$, respectively, with  $\mathcal{E}_h=\mathcal{E}^{0}_h \cup\mathcal{E}^{\partial}_h$. Due to the discontinuity across
the edges $E \in K \cap K^e$, we set up the jump  $\jump{\cdot}$ and average $\average{\cdot}$ operators as follow
\begin{align*}
	&\jump{u}=u|_E\mathbf{n}_{K}+u^e|_E\mathbf{n}_{K^e}, \qquad \qquad \;\;\; \average{u}=\frac{1}{2}\big( u|_E+u^e|_E \big), \\
	&\jump{\nabla u}=\nabla u|_E \cdot \mathbf{n}_{K}+\nabla u^e|_E \cdot \mathbf{n}_{K^e}, \quad	\average{\nabla u}=\frac{1}{2}\big(\nabla u|_E+\nabla u^e|_E \big),
\end{align*}
for a piecewise continuous scalar function $u$  and a piecewise continuous vector field $\nabla u$ across the edge $E$, respectively. For $E \in \mathcal{E}^{\partial}_h$,
we further have $\average{\nabla u}=\nabla u$ and $\jump{u}=u\mathbf{n}$.
Then, by using the symmetric interior penalty Galerkin (SIPG) method with upwinding for the convection term
\cite{DNArnold_FBrezzi_BCockburn_LDMarini_2002a,BRiviere_2008a},  the (bi)--linear forms on the spatial domain for a fixed finite dimensional vector $\xi$ are given by
\begin{subequations}\label{bilinear}
	\begin{align}
		a_h(u,v,\xi)=& \sum \limits_{K \in \mathcal{T}_h} \int \limits_{K} \big( a_N(.,\xi) \nabla u \cdot  \nabla v + \mathbf {b}_N(.,\xi) \cdot \nabla u \,v \big)\, dx  \nonumber\\
        &-  \sum \limits_{ E \in \mathcal{E}_h } \int \limits_E \big( \average{a_N(.,\xi)\nabla u }  \jump{v} + \average{a_N(.,\xi) \nabla v }  \jump{u} \big)\, ds \nonumber \\
		&+ \hspace{-2mm} \sum \limits_{ E \in \mathcal{E}_h }  \frac{\sigma a_N(.,\xi) }{h_E} \hspace{-0.5mm} \int \limits_E \jump{u} \cdot \jump{v} \, ds
		+ \hspace{-1mm}\sum \limits_{K \in \mathcal{T}_h}  \int \limits_{\partial K^{-} \backslash \partial \mathcal{D}} \hspace{-3mm}\mathbf {b}_N(.,\xi) \cdot \mathbf{n}_E (u^e-u)v \, ds \nonumber \\
      &-  \sum \limits_{K \in \mathcal{T}_h} \; \int \limits_{\partial K^{-} \cap \partial \mathcal{D}^{-}} \mathbf {b}_N(.,\xi) \cdot \mathbf{n}_E \,u \,  v  \, ds
	\end{align}
	and
	\begin{align}
		l_h(v,\xi)=&\sum \limits_{K \in \mathcal{T}_h} \int \limits_{K} f v \, dx
		+ \sum \limits_{E \in \mathcal{E}_h^{\partial}} \int \limits_E  \left(\frac{\sigma a_N(.,\xi)}{h_E} \jump{v}  - \average{a_N(.,\xi)\nabla v} \right) u^d \, ds  \nonumber \\
		&- \sum \limits_{K \in \mathcal{T}_h}  \int \limits_{\partial K^{-} \cap \partial \mathcal{D}^{-}} \mathbf{b}_N(.,\xi) \cdot \mathbf{n}_E  \, u^d \, v  \, ds.
	\end{align}
\end{subequations}
Here, $\partial \mathcal{D}^-$ is the  inflow part of the boundary $\partial \mathcal{D}$ given by
\[
\partial \mathcal{D}^- = \set{\boldsymbol{x} \in \partial \mathcal{D}}{ \mathbf{b}_N(\boldsymbol{x},\xi) \cdot \mathbf{n}(\boldsymbol{\boldsymbol{x}}) < 0},
\]
whereas $\partial \mathcal{D}^+ = \partial \mathcal{D} \backslash \partial \mathcal{D}^-$ is the outflow part of $\partial \mathcal{D}$. Analogously,  
the inflow and outflow boundaries of an element $K \in \mathcal{T}_h$  are denoted by $\partial K^-$ and $\partial K^+$, respectively. The constant $\sigma>0$
represents the interior penalty parameter for DG formulation; see, \cite[Sec.~2.7.1]{BRiviere_2008a} for a detailed discussion. Further, denoting the set of all linear polynomials on $K$ by $\mathbb{P}(K)$,  the discontinuous
finite element spaces of trial and test variables are defined by
\begin{align}\label{tspace}
	V_h &= \set{v \in L^2(\mathcal{D})}{v\mid_{K}\in \mathbb{P}(K) \quad \forall K \in \mathcal{T}_h}.
\end{align}

Now, we are ready to state the variational formulation of \eqref{eqn:m1} obtained by stochastic discontinuous Galerkin discretization: Find $u_{h}^q \in V_h \otimes \mathcal{Y}^q$ such that
\begin{equation}\label{eq:bilinear}
	a_{\xi}(u_{h}^q,v_{h}^q) = l_{\xi}(v_{h}^q) \qquad  \forall v_{h}^q \in V_h \otimes \mathcal{Y}^q,
\end{equation}
where
\begin{equation*}
	a_{\xi}(u_{h}^q,v_{h}^q)=\int_{\Gamma}  a_h(u_{h}^q,v_{h}^q,\xi)\rho(\xi) \, d\xi, \quad 	l_{\xi}(v_{h}^q)=\int_{\Gamma}  l_h(v_{h}^q,\xi)\rho(\xi) \, d\xi.
\end{equation*}
Due the jump terms $\jump{\cdot}$, the bilinear form $a_{\xi}(u,v)$  \eqref{eq:bilinear} is not well--defined for 
the functions $ u,v \in H^1(\mathcal{D}) \otimes \mathcal{Y}^q$. However,  if we write the bilinear form $a_{\xi}(u,v)$  \eqref{eq:bilinear}  as follows 
\begin{equation}\label{eq:bilinear2}
	a_{\xi}(u,v) = \widetilde{a}_{\xi}(u,v) + j_{\xi}(u, v)  
\end{equation}
where
\[
j_{\xi}(u, v) = - \int_{\Gamma}  \Big( \sum \limits_{ E \in \mathcal{E}_h } \int \limits_E \big( \average{a_N(.,\xi)\nabla u }  \jump{v} + \average{a_N(.,\xi) \nabla v }  \jump{u} \big)\, ds \Big) \, \rho(\xi) \, d\xi,
\]
then the bilinear form $\widetilde{a}_{\xi}(u,v)$ becomes well--defined for the functions $ u,v \in H^1(\mathcal{D}) \otimes \mathcal{Y}^q$ and the following relation  holds \cite{DSchotzau_LZhu_2009a}
\begin{equation}\label{eq:bilinear3}
	a_{\xi}(u,v) =  \widetilde{a}_{\xi}(u,v), \qquad u,v \in H^1(\mathcal{D}) \otimes \mathcal{Y}^q.
\end{equation}


\section{Residual--based error estimator}\label{sec:estimator}

In this section, we derive a reliable residual--based error estimator between the solution $u \in H^1(\mathcal{D}) \times L^2(\Gamma)$ and the discrete solution 
$u_{h}^q \in V_h \otimes \mathcal{Y}^q$ based on the following energy norm
\begin{eqnarray} \label{energynorm}
	\lVert u\rVert _{\xi}^2= \int_{\Gamma}  \lVert u(.,\xi)\rVert_{e}^2 \, \rho(\xi)\, d\xi,
\end{eqnarray}
where
\begin{align*}
	\lVert u \rVert_{e}=&\Bigg( \sum \limits_{K \in \mathcal{T}_h} \int \limits_{K} a_N \, (\nabla u)^2 \, dx  + \sum \limits_{ E \in \mathcal{E}_h } 
	\frac{\sigma a_N}{h_E} \int \limits_E \jump{u}^2 \, ds \\
    & +\frac{1}{2}\sum \limits_{ E \in \mathcal{E}_h^{\partial}}\int \limits_E |\mathbf {b}_N\cdot \mathbf{n}_E |u^2 ds
	+ \frac{1}{2}\sum \limits_{ E \in \mathcal{E}^{0}_h }\int \limits_E |\mathbf {b}_N\cdot \mathbf{n}_E|(u^e-u)^2 \, ds\Bigg)^{\frac{1}{2}}.
\end{align*}
Throughout this section, we note that the symbol $\lesssim$ is used to denote bounds that are valid up to positive constants independent of the local mesh sizes in both spatial 
and parametric domains and the penalty parameter $\sigma$, provided that $\sigma \geq 1$.

By following \cite{PGCiarlet_2002,LRScott_SZhang_1990a}, we first define an $L^2$--projection operator  $\Pi_{q}: L^2(\Gamma) \rightarrow \mathcal{Y}^q$ by
\begin{subequations}\label{eq:l2proj}
	\begin{eqnarray}
		(\Pi_q(\xi)-\xi,\vartheta)_{L^2(\Gamma)}=0 \qquad \quad \forall \vartheta \in \mathcal{Y}^q, \qquad \forall\xi \in L^2(\Gamma),\label{eq:l2projc}
	\end{eqnarray}
	and an local $L^2$--projection operator  $\Pi_{q_n}: L^2(\Gamma) \rightarrow \mathcal{Y}^{q_n}$ for $n=1,2, \ldots, N$ by
	\begin{eqnarray}
		(\Pi_{q_n}(\xi)-\xi,\vartheta)_{L^2(\Gamma)}=0 \qquad \quad \forall \vartheta \in \mathcal{Y}^{q_n}, \qquad \forall\xi \in L^2(\Gamma). \label{eq:l2projclocal}
	\end{eqnarray}
\end{subequations}
Setting $\vartheta = \Pi_{q}(\xi)$  and $\vartheta = \Pi_{q_n}(\xi)$ in \eqref{eq:l2projc} and \eqref{eq:l2projclocal}, respectively, and applying Cauchy--Schwarz inequality, one can easily show that
\begin{equation}\label{ineq:inter1}
	\lVert \Pi_{q}(\xi)\rVert_{L^2(\Gamma)} \leq  \lVert \xi \rVert_{L^2(\Gamma)} \quad \hbox{and} \quad
	\lVert \Pi_{q_n}(\xi)\rVert_{L^2(\Gamma)} \leq \lVert \xi \rVert_{L^2(\Gamma)}.
\end{equation}
Moreover, since $\mathcal{Y}^{q_n} \subset \mathcal{Y}^q$, we have
\begin{eqnarray}
	(\Pi_q(\xi)-\xi,\Pi_{q_n}(\xi))_{L^2(\Gamma)}=0 \qquad \quad  \forall\xi \in L^2(\Gamma).
\end{eqnarray}
Let 
\[
f_h, u^d_h, a_{h,N} \in V_h, \quad  \hbox{and} \quad  \mathbf{b}_{h,N} \in (V_h) ^2
\]
denote the piecewise polynomial approximations  to the right--hand side function $f$, the Dirichlet boundary condition $u^d$, and the truncated random coefficient 
functions $a_N$ and $\mathbf{b}_N$, respectively. In addition, the weights on the element $K$ and the edge $E$ are given, respectively, by 
\begin{subequations}\label{weights}
	\begin{equation}
		\rho_{h,K} = h_K^2/ \|a_N\|_{L^2(K)}, \qquad   \rho_{q,K}= 1/(N \|a_N\|_{L^2(K)}),
	\end{equation}
	\begin{equation}
		\rho_{h,E} = h_E/ \|a_N\|_{L^2(E)},   \qquad  \rho_{q,E} = 1/(N \|a_N\|_{L^2(E)}).
	\end{equation}
\end{subequations}
Then, we propose  the following total error estimator 
\begin{align}\label{totalerror}
	\eta_T   &= \Bigg(  \underbrace{\sum \limits_{ K \in \mathcal{T}_h } \eta_{h,K}^2}_{\eta_h^2}  
	+ \underbrace{\sum \limits_{ K \in \mathcal{T}_h } \eta_{\theta,K}^2}_{\eta_{\theta}^2} 
	+ \underbrace{\sum \limits_{ K \in \mathcal{T}_h } \eta_{q,K}^2}_{\eta_q^2} \Bigg)^{1/2},
\end{align}
where the spatial error estimator for each element $K \in \mathcal{T}_h$ is
\begin{align*}
	\eta_{h,K}^2 =& \int \limits_{\Gamma} \left( \rho_{h,K} \, \lVert f_h + \nabla \cdot (a_{h,N} \nabla u_h^q) - \mathbf {b}_{h,N} \cdot \nabla u_h^q  \rVert_{L^2(K)}^2  \right) \rho(\xi)d\xi \\
	& + \int \limits_{\Gamma}  \sum \limits_{ E\in \partial K \backslash \partial \mathcal{D} } \Big( \rho_{h,E} \, \lVert \jump{a_{h,N}\nabla u_h^q } \rVert^2_{L^2(E)}\Big)\;  \rho(\xi)d\xi  \\
	& + \int \limits_{\Gamma}   \sum \limits_{E\in \partial K \backslash \partial \mathcal{D}} \left(\frac{\sigma}{h_E} \|a_{h,N}\|_{L^2(E)} 
	+ \|\mathbf{b}_{h,N}\|_{L^2(E)} + \rho_{h,E} \, \|\mathbf{b}_{h,N}\|_{L^2(E)}^2 \right) \,\\
	& \hspace{2cm} \times \lVert \jump{ u_h^q } \rVert^2_{L^2(E)} \; \rho(\xi)d\xi\\
	& + \int \limits_{\Gamma} \sum \limits_{ E \in \partial K \cap \partial \mathcal{D} }  \left(\frac{\sigma }{h_E} \|a_N\|_{L^2(E)}
	+ \|\mathbf{b}_N\|_{L^2(E)}\right)\lVert  u_h^q - u^d_{h} \rVert^2_{L^2(E)}\; \rho(\xi)d\xi,
\end{align*}
data approximation term emerged from  the discontinuity of the functions or parameters  is
\begin{align*}
	\eta_{\theta,K}^2 =&   \int \limits_{\Gamma} \left(  \rho_{h,K} \,  \Big(\lVert f-f_h  \rVert_{L^2(K)}^2+ \lVert\nabla \cdot ((a_N-a_{h,N}) \nabla u_h^q) \rVert_{L^2(K)}^2  \right. \\
    &  \left. \hspace{2cm} + \lVert (\mathbf {b}_N -\mathbf {b}_{h,N}) \cdot \nabla u_h^q  \rVert_{L^2(K)}^2\Big) \right) \rho(\xi)d\xi \\
	& + \int \limits_{\Gamma} \Big(  \sum \limits_{ E\in \partial K \backslash \partial \mathcal{D} } \rho_{h,E} \, \lVert \jump{(a_N-a_{h,N})\nabla u_h^q } \rVert^2_{L^2(E)}\; \Big) \rho(\xi)d\xi  \\
	& + \int \limits_{\Gamma}   \sum \limits_{E \in \partial K \backslash \partial \mathcal{D}} \hspace{-1mm}  \Big( \frac{\sigma}{h_E} \|a_N-a_{h,N}\|_{L^2(E)} + \|\mathbf{b}_N - \mathbf{b}_{h,N}\|_{L^2(E)}  \\
    &   \hspace{3cm}+ \rho_{h,E}  \, \|\mathbf{b}_N - \mathbf{b}_{h,N}\|_{L^2(E)}^2 \Big)\lVert \jump{ u_h^q } \rVert^2_{L^2(E)}  \; \rho(\xi)d\xi \\
	&  + \int \limits_{\Gamma}  \sum \limits_{ E \in \partial K \cap \partial \mathcal{D} } \hspace{-1mm} \Big(\frac{\sigma}{h_E} \|a_N\|_{L^2(E)} 
	+ \|\mathbf{b}_N\|_{L^2(E)}  \Big)\lVert  u^d_h - u^d \rVert^2_{L^2(E)}\;  \rho(\xi)d\xi,
\end{align*}
and the parametric error estimator is equivalent to
\begin{align*}
	\eta_{q,K}^2 = &   \int \limits_{\Gamma} \rho_{q,K} \, \sum \limits_{ n=1 }^N  \lVert \Pi_{q_n}(a_N \nabla u_h^q) - a_N \nabla u_h^q \rVert_{L^2(K)}^2 \,  \rho(\xi) \,d\xi  \\
    &+ \int \limits_{\Gamma} \rho_{q,K} \, \sum \limits_{ n=1 }^N  \lVert \Pi_{q_n}(\mathbf {b}_N \cdot \nabla u_h^q)- \mathbf {b}_N \cdot \nabla u_h^q \rVert_{L^2(K)}^2 \,  \rho(\xi) \,d\xi \nonumber \\
	& +  \int \limits_{\Gamma}  \sum \limits_{ E\in \partial K \backslash \partial \mathcal{D} } \rho_{q,E} \, \sum \limits_{ n=1 }^N   
	\lVert \Pi_{q_n}\left( \mathbf{b}_N \cdot \jump{ u_h^q}\right) - \mathbf{b}_N \cdot \jump{ u_h^q}\rVert_{L^2(E)}^2 \,  \rho(\xi) \,d\xi. \nonumber
\end{align*}
Before the derivation of reliability estimate for the proposed error estimates in \eqref{totalerror}, we construct  an operator 
$\mathfrak{T}_h : V_h  \otimes \mathcal{Y}^q \rightarrow H^1 \otimes \mathcal{Y}^q $, following the discussion in  
\cite[Theorem~2.1]{OAKarakashian_FPascal_2007} for the deterministic models,  satisfying  $ \mathfrak{T}_h v |_{\partial \mathcal{D}}  = \tilde{v}, \;  \forall v \in V_h  \otimes \mathcal{Y}^q$  and
\begin{subequations}\label{ineq:conform}
	\begin{align}
		\int \limits_{\Gamma} \hspace{-1.5mm} \sum \limits_{K \in \mathcal{T}_h}  \lVert v -  \mathfrak{T}_h v \rVert_{L^2(K)}^2 \; \rho(\xi) d\xi 
		& \lesssim  \int \limits_{\Gamma} \sum \limits_{ E \in \mathcal{E}^{0}_h }  h_E \,\lVert \jump{v} \rVert_{L^2(E)}^2  \; \rho(\xi) d\xi \nonumber\\
        & \quad+  \int \limits_{\Gamma} \sum \limits_{ E \in  \mathcal{E}_h^{\partial}}  h_E \,\lVert v -\tilde{v} \rVert_{L^2(E)}^2 \;\rho(\xi) d\xi, \\
		\int \limits_{\Gamma}  \hspace{-1.5mm} \sum \limits_{K \in \mathcal{T}_h}  \lVert \nabla(v -  \mathfrak{T}_h v) \rVert_{L^2(K)}^2 \; \rho(\xi) d\xi 
		& \lesssim  \int \limits_{\Gamma} \sum \limits_{ E \in \mathcal{E}^{0}_h }  h_E^{-1} \,\lVert \jump{v} \rVert_{L^2(E)}^2  \; \rho(\xi) d\xi  \nonumber \\
        &\quad +  \int \limits_{\Gamma} \sum \limits_{ E \in  \mathcal{E}_h^{\partial}}  h_E^{-1} \,\lVert v -\tilde{v} \rVert_{L^2(E)}^2 \;\rho(\xi) d\xi.	
	\end{align}
\end{subequations}
Moreover, a Cl\'{e}ment type interpolation $ I_h : H_0^1(\mathcal{D}) \rightarrow V_h^c$, where $V_h^c = V_h \cap H_0^1(\mathcal{D})$ is a conforming subspace of $V_h$,  
satisfies for $0 \leq k \leq l \leq 2$ \cite{ZCai_SZhang_2009,RVerfurth_1996}:
\begin{subequations}\label{ineq:ScottZhang}
	\begin{align}
		\lVert \nabla^k (v - I_hv) \rVert_{L^2(K)} &\lesssim h_K^{l-k} \lVert \nabla^{l} v \rVert_{L^2(\Delta_K)}, \label{ineq:ScottZhanga} \\
		\lVert v - I_hv \rVert_{L^2(E)} &\lesssim h_E^{1/2} \lVert \nabla v \rVert_{L^2(\Delta_E)}, \label{ineq:ScottZhangb}
	\end{align}
\end{subequations}
where $\Delta_K$ and $\Delta_E$ are the union of elements that share at least one vertex with the element $K$ and  the edge $E$, respectively. By the 
inequalities \eqref{ineq:inter1} and \eqref{ineq:ScottZhang}, for  $\forall v \in L^2(\Gamma;H^1(\mathcal{D}))$, we further have
\vspace{-2mm}
\begin{align}\label{ineqn:prjbddK}
	& \hspace{-1mm} \|\Pi_q(v-I_h v)\|_{L^2(\Gamma;L^2(K))}^2  \lesssim  \|v - I_h v\|_{L^2(\Gamma;L^2(K))}^2
	\lesssim     h_K^2  \|\nabla v\|_{L^2(\Gamma;L^2(\Delta_K))}^2
\end{align}
and
\vspace{-2mm}
\begin{align}\label{ineqn:prjbddE}
	& \hspace{-1mm} \|\Pi_q(v-I_h v)\|_{L^2(\Gamma;L^2(E))}^2  \lesssim  \|v - I_h v\|_{L^2(\Gamma;L^2(E))}^2
	\lesssim     h_E  \|\nabla v\|_{L^2(\Gamma;L^2(\Delta_E))}^2.
\end{align}



We can now find an upper bound for the error between the solution $u \in H^1(\mathcal{D}) \times L^2(\Gamma)$   and the discrete solution $u_{h}^q \in V_h \otimes \mathcal{Y}^q$ obtained 
from the stochastic discontinuous Galerkin, which states the reliability of the proposed estimator in \eqref{totalerror}. To derive the reliability estimate, we follow 
the standard techniques, such as the decomposition of the discontinuous discrete solution in the spatial domain into a conforming part plus a remainder 
in the spatial domain, and the projection operators in the spatial and parametric domains. 
Compared to the studies in the deterministic setting, for instance, \cite{PHouston_DSchotzau_TPWihler_2007,DSchotzau_LZhu_2009a}, 
we obtain additional error contributions due to the projection into parametric space.

\begin{theorem}\label{thm:reliable}
	Let $u$ be the continuous solution of \eqref{eqn:m1} on $\Gamma$  and  $u_h^q$ be its SDG approximation \eqref{eq:bilinear}. For the estimator $\eta_T$ given in \eqref{totalerror}, we have 
	\vspace{-1mm}
	\begin{align*}	
		\lVert u - u_h^q \rVert_{\xi} \lesssim  \eta_T.
	\end{align*}
\end{theorem}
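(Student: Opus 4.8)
The plan is to follow the standard two-step strategy for a posteriori analysis of DG methods, adapted to the tensor-product spatial/parametric setting. First I would split the discrete solution through the conforming operator by writing $u_h^q = u_h^c + u_h^r$, with conforming part $u_h^c := \mathfrak{T}_h u_h^q \in H^1(\mathcal{D}) \otimes \mathcal{Y}^q$ and remainder $u_h^r := u_h^q - u_h^c$. The triangle inequality then gives $\lVert u - u_h^q\rVert_\xi \leq \lVert u - u_h^c\rVert_\xi + \lVert u_h^r\rVert_\xi$. The remainder is nonconforming only through the jumps of $u_h^q$, so I would bound $\lVert u_h^r\rVert_\xi$ directly by the approximation estimates \eqref{ineq:conform}: the gradient and $L^2$ contributions to the energy norm are controlled by $\int_\Gamma\sum_{E} h_E^{-1}\lVert\jump{u_h^q}\rVert_{L^2(E)}^2\,\rho(\xi)d\xi$ together with the analogous boundary term, both of which are already present, up to the data-consistent weights, in the jump contributions of $\eta_{h,K}$. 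This reduces the problem to estimating the conforming error $e^c := u - u_h^c$.

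For the conforming part I would exploit that, since $\nabla\cdot\mathbf{b}_N = 0$ and $\sigma\geq 1$, the well-defined form $\widetilde{a}_\xi$ is coercive on $H^1(\mathcal{D})\otimes\mathcal{Y}^q$ in the energy norm, so that $\lVert e^c\rVert_\xi^2 \lesssim \widetilde{a}_\xi(e^c, e^c)$. Because $e^c$ is conforming, the identity \eqref{eq:bilinear3} and the continuous weak form give $\widetilde{a}_\xi(u, e^c) = l_\xi(e^c)$, whence $\widetilde{a}_\xi(e^c,e^c) = l_\xi(e^c) - \widetilde{a}_\xi(u_h^c, e^c)$. To bring in Galerkin orthogonality I would subtract the discrete equation \eqref{eq:bilinear} tested against the computable approximation $v_h := \Pi_q(I_h e^c) \in V_h\otimes\mathcal{Y}^q$, where $I_h$ is the Cl\'ement interpolant and $\Pi_q$ the parametric $L^2$-projection. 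Using $a_\xi(u_h^q, v_h) = l_\xi(v_h)$ together with the splitting $u_h^c = u_h^q - u_h^r$, the residual reorganizes into a linear functional of $e^c - v_h$ plus controllable terms in $u_h^r$, the latter being again absorbed by \eqref{ineq:conform}.

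The technical core is then to bound the residual $l_\xi(e^c - v_h) - \widetilde{a}_\xi(u_h^q, e^c - v_h)$. I would integrate by parts element by element to expose the interior residual $f_h + \nabla\cdot(a_{h,N}\nabla u_h^q) - \mathbf{b}_{h,N}\cdot\nabla u_h^q$ and, after collecting contributions from adjacent elements, the flux jumps $\jump{a_{h,N}\nabla u_h^q}$ and the boundary mismatch $u_h^q - u^d_h$, which are precisely the ingredients of $\eta_{h,K}$. Replacing the exact data $f, a_N, \mathbf{b}_N, u^d$ by their piecewise-polynomial approximations $f_h, a_{h,N}, \mathbf{b}_{h,N}, u^d_h$ produces the oscillation remainders collected in $\eta_{\theta,K}$. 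The genuinely new parametric estimator $\eta_{q,K}$ appears in the diffusion and convection volume terms and the convective edge term: inserting the local projections $\Pi_{q_n}$ and using their orthogonality \eqref{eq:l2projclocal} against the projected test function converts the parametric consistency error of $a_N\nabla u_h^q$, $\mathbf{b}_N\cdot\nabla u_h^q$, and $\mathbf{b}_N\cdot\jump{u_h^q}$ into the $\Pi_{q_n}$-difference norms. Finally, a term-by-term Cauchy--Schwarz inequality, combined with the interpolation/projection bounds \eqref{ineqn:prjbddK}--\eqref{ineqn:prjbddE} and the weights \eqref{weights}, matches each residual factor with gradient quantities of $e^c$ that are absorbed into $\lVert e^c\rVert_\xi$, closing the estimate after dividing by $\lVert e^c\rVert_\xi$.

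The main obstacle I expect is the simultaneous and consistent treatment of the spatial and parametric approximations while keeping the hidden constant independent of the mesh size, the polynomial degree, and the truncation dimension $N$. In particular, extracting $\eta_{q,K}$ with the correct weights $\rho_{q,K}, \rho_{q,E}$ requires that the sum over the $N$ local projections $\Pi_{q_n}$ be controlled by a single factor of $\lVert\nabla e^c\rVert$, which is exactly what the $1/N$ scaling in \eqref{weights} is designed to absorb; verifying that this scaling, together with the Cauchy--Schwarz splitting across the $n$-sum, yields an $N$-independent constant is the delicate point. A secondary difficulty is ensuring that the convection contributions — both the upwind volume term and the interior convective jump in the energy norm — are reproduced with the correct signs after integration by parts, which relies essentially on the incompressibility $\nabla\cdot\mathbf{b}_N = 0$ assumed in Assumption~\ref{assump_random}.
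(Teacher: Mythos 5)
Your proposal is correct and follows essentially the same route as the paper: the conforming/nonconforming splitting $u_h^q=\mathfrak{T}_h u_h^q+u_{h,r}^q$, control of the remainder through \eqref{ineq:conform} by the jump contributions of $\eta_h$ and $\eta_\theta$, and a residual argument for the conforming error using Galerkin orthogonality, the Cl\'ement interpolant, the parametric projections $\Pi_q$, $\Pi_{q_n}$, integration by parts, and the $1/N$ weighting to absorb the sum over local projections. The only cosmetic differences are that the paper invokes the inf--sup condition of Sch\"otzau--Zhu for $\widetilde{a}_\xi$ where you use coercivity (equivalent here, since for $e^c\in H_0^1(\mathcal{D})\otimes\mathcal{Y}^q$ the energy norm reduces to the weighted gradient term), and that the paper inserts $\Pi_q$ by adding and subtracting it in the residual terms rather than projecting the test function, while keeping the consistency term $j_{\xi}(u_h^q,I_hv)$ explicit and bounding it by an inverse estimate.
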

\begin{proof}
	Split the discretized solution  $u_h^q$ into a conforming part plus a remainder in the spatial domain, i.e.,
	$
	u_h^q = u_{h,c}^q +u_{h,r}^q,
	$
	where $u_{h,c}^q = \mathfrak{T}_h u_h^q \in V_h^c  \otimes \mathcal{Y}^q$. Then, by the triangle inequality, we have
	\vspace{-1mm}
	\begin{align}\label{prf1}
		\lVert u - u_h^q \rVert_{\xi} \leq 	\lVert u_{h,r}^q \rVert_{\xi} + \lVert u - u_{h,c}^q  \rVert_{\xi}.
	\end{align}
	First, we find an upper bound for the remainder term in \eqref{prf1} in terms of the error estimator \eqref{totalerror}. 
	By the fact that $\jump{ u_{h,r}^q } = \jump{u_h^q}$ and the definition of the energy norm \eqref{energynorm},  we have
	\begin{align}\label{prf2}
		\lVert u_{h,r}^q  \rVert_{\xi}^2 = 
		&  \int \limits_{\Gamma} \hspace{-1.5mm}\sum \limits_{K \in \mathcal{T}_h} \int \limits_{K} a_N (\nabla u_{h,r}^q )^2 \, dx \, \rho(\xi) d\xi
		+ \int \limits_{\Gamma} \hspace{-1.5mm} \sum \limits_{E \in \mathcal{E}_h} \hspace{-1mm} \frac{\sigma a_N}{h_E} \int \limits_E \jump{u_h^q}^2 \, ds \, \rho(\xi) d\xi   \nonumber \\
		&+ \int \limits_{\Gamma}  \frac{1}{2}\sum \limits_{ E \in \mathcal{E}^{0}_h }\int \limits_E |\mathbf {b}_N\cdot \mathbf{n}_E|((u_{h,r}^q )^e - u_{h,r}^q )^2 \, ds\, \rho(\xi) d\xi \nonumber \\
       &+  \int \limits_{\Gamma}  \frac{1}{2} \sum \limits_{E \in \mathcal{E}_h^{\partial}} \int \limits_E |\mathbf {b}_N\cdot \mathbf{n}_E| (u_{h,r}^q )^2 ds \, \rho(\xi) d\xi.
	\end{align}
	An application of Cauchy-Schwarz inequality, the  inequalities in \eqref{ineq:conform} with $u_{h,r}^q  = u_h^q - \mathfrak{T}_h u_h^q$, 
	adding/subtracting the data approximation terms, and Young's inequality into the first term in \eqref{prf2} yields
	\begin{align*}
		& \int \limits_{\Gamma}  \sum \limits_{K \in \mathcal{T}_h} \int \limits_{K} a_N (\nabla u_{h,r}^q )^2 \, dx \, \rho(\xi) d\xi  \nonumber \\
		& \; \lesssim  \int \limits_{\Gamma}  \frac{1}{\sigma}\sum \limits_{E \in \mathcal{E}^{0}_h }  \frac{\sigma}{h_E} \Big( \|a_{h,N}\|_{L^2(E)}
		+  \|a_N -a_{h,N}\|_{L^2(E)} \Big) \lVert \jump{u_h^q}\rVert^2_{L^2(E)} \, \rho(\xi) d\xi \nonumber \\
    	& \qquad +  \int \limits_{\Gamma}  \frac{1}{\sigma}\sum \limits_{E \in \mathcal{E}^{\partial}_h }   \frac{\sigma}{h_E} 
		\|a_N \|_{L^2(E)}  \Big( \lVert u_h^q - u^d_h\rVert^2_{L^2(E)} + \lVert  u^d_h -u^d \rVert^2_{L^2(E)} \Big)\, \rho(\xi) d\xi  \nonumber 
    \end{align*}
    \begin{align}\label{prf3}
		& \; \lesssim \frac{1}{\sigma} \Big( \sum \limits_{ K \in \mathcal{T}_h } \eta_{h,K}^2 + \sum \limits_{ K \in \mathcal{T}_h } \eta_{\theta,K}^2 \Big).
	\end{align}
	A similar upper bound is also obtained for the second term in \eqref{prf2}. For the remaining terms in \eqref{prf2}, Cauchy-Schwarz and Young's inequalities  produce
	\begin{align}\label{prf32}
		&\int \limits_{\Gamma}  \frac{1}{2}  \sum \limits_{ E \in \mathcal{E}_h  }\int \limits_E |\mathbf {b}_N\cdot \mathbf{n}_E| ((u_{h,r}^q )^e - u_{h,r}^q  )^2 \, ds\, \rho(\xi) d\xi \nonumber\\
		& \; \lesssim  \int \limits_{\Gamma}   \hspace{-1.5mm} \sum \limits_{ E \in \mathcal{E}_h^0  }  \left( \|\mathbf {b}_{h,N}\|_{L^2(E)} +
		\|\mathbf {b} - \mathbf {b}_{h,N}\|_{L^2(E)} \right) \lVert \jump{u_h^q}\rVert^2_{L^2(E)} \, \rho(\xi) d\xi \nonumber \\
		& \qquad + \int \limits_{\Gamma}   \hspace{-1.5mm} \sum \limits_{ E \in \mathcal{E}_h^{\partial}  }  \|\mathbf {b}_N\|_{L^2(E)}
		\Big( \lVert u_h^q - u^d_h\rVert^2_{L^2(E)} + \lVert  u^d_h -u^d \rVert^2_{L^2(E)} \Big)\, ds\, \rho(\xi) d\xi   \nonumber \\
		& \; \lesssim \sum \limits_{ K \in \mathcal{T}_h } \eta_{h,K}^2 + \sum \limits_{ K \in \mathcal{T}_h } \eta_{\theta,K}^2.
	\end{align}
	So combining \eqref{prf3} and \eqref{prf32}, we get
	\begin{equation}\label{prf4}
		\lVert u_{h,r}^q  \rVert_{\xi}^2 \lesssim \sum \limits_{ K \in \mathcal{T}_h } \eta_{h,K}^2 + \sum \limits_{ K \in \mathcal{T}_h } \eta_{\theta,K}^2.
	\end{equation}
	
	Next, we find an estimate for the second term in \eqref{prf1}. By the fact that $ u|_{\partial \mathcal{D}} = u_{h,c}^q |_{\partial \mathcal{D}} = u^d$  
	due to the construction of  $\mathfrak{T}_h$, we have $u-u_{h,c}^q  \in H_0^1(\mathcal{D}) \otimes \mathcal{Y}^q$. Then, 
	the following inf--sup condition holds for all $v \in L^2(\Gamma;H^1_0(\mathcal{D}))$ \cite[Lemma~4.4]{DSchotzau_LZhu_2009a}
	\begin{align}\label{prf5}
		\lVert u - u_{h,c}^q  \rVert_{\xi} &\lesssim  \sup\limits_{v \in L^2(\Gamma;H^1_0(\mathcal{D}))} \dfrac{ \widetilde{a}_{\xi}(u-u_{h,c}^q ,v)}{\lVert v \rVert _{\xi}}.
	\end{align}
	By the bilinear systems \eqref{eq:bilinear} and \eqref{eq:bilinear2}, Cl\'{e}ment type interpolation estimates \eqref{ineq:ScottZhang}, and the continuity of the bilinear form, we obtain
	\begin{align}
		 &\widetilde{a}_{\xi}(u - u_{h,c}^q,v) \; =  \widetilde{a}_{\xi}(u,v) -  \widetilde{a}_{\xi}(u_{h,c}^q,v) = 
		\int \limits_{\Gamma} \int \limits_{\mathcal{D}} f v \, dx \, \rho(\xi)d\xi -  \widetilde{a}_{\xi}(u_{h,c}^q,v) 
		\nonumber\\
  &= \int \limits_{\Gamma} \int \limits_{\mathcal{D}} f v \, dx \, \rho(\xi)d\xi -  \widetilde{a}_{\xi}(u_h^q,v) +  \widetilde{a}_{\xi}(u_{h,r}^q,v) \nonumber \\
		& \; = \int \limits_{\Gamma} \int \limits_{\mathcal{D}} f I_hv \, dx \, \rho(\xi)d\xi  + \int \limits_{\Gamma} \int \limits_{\mathcal{D}} f (v-I_hv) \, dx \, \rho(\xi)d\xi  
		-  \widetilde{a}_{\xi}(u_h^q,v) +  \widetilde{a}_{\xi}(u_{h,r}^q,v) \nonumber \\
		& \; = \int \limits_{\Gamma} \int \limits_{\mathcal{D}} f (v-I_hv) \, dx \, \rho(\xi)d\xi  -  \widetilde{a}_{\xi}(u_h^q,v-I_hv) 
		+  j_{\xi}(u_h^q, I_h v) +  \widetilde{a}_{\xi}(u_{h,r}^q,v) \nonumber
    \end{align}
    \begin{align}\label{prf6}
		& \; \leq \lVert u_{h,r}^q \rVert_{\xi} \lVert v \rVert_{\xi} +  j_{\xi}(u_h^q, I_h v) 
		+ \int \limits_{\Gamma} \int \limits_{\mathcal{D}} f (v-I_hv) \, dx \, \rho(\xi)d\xi  - \widetilde{a}_{\xi}(u_h^q,v-I_hv).
	\end{align}
	The term $j_{\xi}(u_h^q, I_h v)$  in \eqref{prf6} is equivalent to
	\[
	j_{\xi}(u_h^q, I_h v) = - \int_{\Gamma}  \sum \limits_{ E \in \mathcal{E}_h } \int \limits_E \average{a_N \nabla I_h v }  \jump{u_h^q} \, ds \, \rho(\xi) \, d\xi,
	\]
	since $I_h v \in V_h^c$. Then, with the help of Cauchy--Schwarz  and inverse estimate
	\[
	\lVert v \rVert_{L^2(E)}   \lesssim  h_K^{-1/2} \|v\|_{L^2(K)} \qquad \forall v \in V_h
	\]
	with $h_E \lesssim h_K$, adding/subtracting the data approximation terms, and definition of the interpolation operator $I_h v$,  we obtain
	\begin{align}\label{prf63}
		&j_{\xi}(u_h^q, I_h v)   \nonumber  \\
		&  \; \lesssim   \sum \limits_{E \in \mathcal{E}_h} \| a_N \nabla I_h v \|_{L^2(\Gamma; L^2(E))} \|\jump{u_h^q}\|_{L^2(\Gamma; L^2(E))} \nonumber  \\
		& \;  \lesssim   \sum \limits_{E \in \mathcal{E}_h}  h_E^{1/2} \|a_N^{1/2} \nabla I_h v\|_{L^2(\Gamma; L^2(E))}  
		\sum \limits_{E \in \mathcal{E}_h} \|a_N^{1/2}\|_{L^2(\Gamma; L^2(E))} h_E^{-1/2} \|\jump{u_h^q}\|_{L^2(\Gamma; L^2(E))} \nonumber \\
		& \;  \lesssim  \sum \limits_{K \in \mathcal{T}_h}  \|a_N^{1/2} \nabla I_h v\|_{L^2(\Gamma; L^2(K))} 
		\sum \limits_{E \in \mathcal{E}_h} \|a_N^{1/2}\|_{L^2(\Gamma; L^2(E))} h_E^{-1/2} \|\jump{u_h^q}\|_{L^2(\Gamma; L^2(E))} \nonumber \\
		& \;  \lesssim  \sigma^{-1} \left(\sum \limits_{ K \in \mathcal{T}_h } \eta_{h,K}^2 + \sum \limits_{ K \in \mathcal{T}_h } \eta_{\theta,K}^2  \right)^{1/2} \|v\|_{\xi}.
	\end{align}
	Adding/substracting the  $L^2$--projection operator  $\Pi_{q}$   with $v \in L^2(\Gamma;H^1_0(\mathcal{D}))$, the last term  in \eqref{prf6} is stated as follows 
	\begin{equation}\label{prf7}
		T = \sum \limits_{i=1}^4 A_i,
	\end{equation}
	where
	\begin{align*}
		A_1  =&  \int \limits_{\Gamma}    \sum \limits_{K \in \mathcal{T}_h} \int \limits_{K}  \Big(f (v -I_hv) - \Pi_{q}(a_N \, \nabla u_h^q) \cdot  \nabla (v -I_hv)  \Big) dx \, \rho(\xi) d\xi \\
        &- \int \limits_{\Gamma}    \sum \limits_{K \in \mathcal{T}_h} \int \limits_{K}   \Pi_q(\mathbf{b}_N \cdot \nabla u_h^q) (v-I_h v)  dx \, \rho(\xi) d\xi \\
		& -  \int \limits_{\Gamma}  \sum \limits_{K \in \mathcal{T}_h} \int \limits_{\partial K^{-} \backslash \partial \mathcal{D}}  
		\Pi_{q}((\mathbf {b}_N \cdot \mathbf{n}_E) ((u_h^q)^e-u_h^q)(v -I_hv) \, ds\, \rho(\xi) d\xi,  
    \end{align*}
    \begin{align*}                         
		A_2 =& \int \limits_{\Gamma}  \sum \limits_{K \in \mathcal{T}_h} \int \limits_{K}   \Big(\Pi_{q}(a_N \, \nabla u_h^q) - a_N \nabla u_h^q \Big) \cdot  \nabla (v -I_hv)  \, dx \, \rho(\xi) d\xi, \\       
		A_3 =& \int \limits_{\Gamma}  \sum \limits_{K \in \mathcal{T}_h} \int \limits_{K} \Big(\Pi_q(\mathbf{b}_N \cdot \nabla u_h^q) - \mathbf{b}_N \cdot \nabla u_h^q \Big) (v-I_h v)\, dx \, \rho(\xi) d\xi, \\
		A_4 =& \int \limits_{\Gamma}  \sum \limits_{K \in \mathcal{T}_h}   \int \limits_{\partial K^{-} \backslash \partial \mathcal{D}} \hspace{-3.5mm} 
		\Big(\Pi_{q}((\mathbf {b}_N \cdot \mathbf{n}_E) ((u_h^q)^e-u_h^q)) -(\mathbf {b}_N \cdot \mathbf{n}_E) ((u_h^q)^e-u_h^q) \Big) \,  \\
        & \hspace{2cm} \times (v -I_hv)   dx \, \rho(\xi) d\xi.
	\end{align*}
	With the help of the definition of $L^2$--projection operator $\Pi_{q}$ \eqref{eq:l2projc},  the fact that $\Pi_q f = f$, and the integration by parts over $\mathcal{D}$, we obtain
	\vspace{-2mm}
	\begin{align}\label{prf8}
		A_1
		= &   \underbrace{\int \limits_{\Gamma}  \sum \limits_{K \in \mathcal{T}_h} \int \limits_{K} \big( f + \nabla \cdot (a_N \nabla u_h^q) - \mathbf{b}_N \cdot \nabla u_h^q \big) \, 
			\Pi_q(v -I_hv) \, dx \, \rho(\xi) d\xi}_{A_{1,1}} \nonumber \\
		& - \underbrace{\int \limits_{\Gamma}  \sum \limits_{ E \in \mathcal{E}^{0}_h }  \int \limits_E a_N \nabla u_h^q \cdot \mathbf{n}_E \, \Pi_{q}(v-I_hv)   \, ds \, \rho(\xi) d\xi}_{A_{1,2}}   \nonumber \\
		& -  \underbrace{\int \limits_{\Gamma}  \sum \limits_{K \in \mathcal{T}_h} \int \limits_{\partial K^{-} \backslash \partial \mathcal{D}} 
			((\mathbf {b}_N \cdot \mathbf{n}_E) ((u_h^q)^e-u_h^q) \, \Pi_{q}(v -I_hv) \, ds\, \rho(\xi) d\xi}_{A_{1,3}}.
	\end{align}
	By adding/subtracting the data approximation terms and using Cauchy--Schwarz inequality with \eqref{energynorm} and \eqref{ineqn:prjbddK}, the first  term in \eqref{prf8}
	is bounded by
	\begin{subequations}\label{prf9}
		\begin{align}
			A_{1,1} =   &   \int \limits_{\Gamma}   \sum \limits_{K \in \mathcal{T}_h} \int \limits_{K} \big( f_h + \nabla \cdot (a_{h,N} \nabla u_h^q) - \mathbf{b}_{h,N} \cdot \nabla u_h^q \big) \,
			\Pi_q(v -I_hv) \, dx \, \rho(\xi) d\xi \nonumber \\  
			& +   \int \limits_{\Gamma}   \sum \limits_{K \in \mathcal{T}_h} \int \limits_{K} \big( (f-f_h) + \nabla \cdot ((a_N-a_{h,N}) \nabla u_h^q) - (\mathbf{b}_N -\mathbf{b}_{h,N}) \cdot \nabla u_h^q \big) \,  \nonumber\\
            & \hspace{2cm} \times\Pi_q(v -I_hv) \, dx \, \rho(\xi) d\xi \nonumber \\
			\lesssim &  \left( \sum \limits_{K \in \mathcal{T}_h} \eta_{h,K}^2 + \eta_{\theta,K}^2 \right)^{1/2} \|v\|_{\xi}.
		\end{align}
		Analogously, an application of the Cauchy--Schwarz, the inequality \eqref{ineqn:prjbddE}, and  the definition of energy norm \eqref{energynorm} yields
		\begin{align}
			A_{1,2}  \lesssim   &  \sum \limits_{E \in \mathcal{E}^0}   \|\jump{ a_N \,\nabla u_h^q} \|_{L^2(\Gamma; L^2(E))} \, \|\Pi_{q}(v -I_hv)\|_{L^2(\Gamma; L^2(E))} \nonumber \\
			\lesssim    &  \sum \limits_{E \in \mathcal{E}^0}   \|\jump{((a_N - a_{h,N}) + a_{h,N} ) \nabla u_h^q} \|_{L^2(\Gamma; L^2(E))} \, h_E^{1/2} \|\nabla v\|_{L^2(\Gamma;L^2(\Delta_E))} \nonumber \\
			&  \lesssim      \left( \sum \limits_{K \in \mathcal{T}_h} \eta_{h,K}^2 + \eta_{\theta,K}^2 \right)^{1/2} \|v\|_{\xi}, \\     
			A_{1,3}  \lesssim   &  \sum \limits_{E \in \mathcal{E}^0}   \| \mathbf {b}_N \cdot \jump{u_h^q} \|_{L^2(\Gamma; L^2(E))} \, \|\Pi_{q}(v -I_hv)\|_{L^2(\Gamma; L^2(E))} \nonumber \\
			\lesssim    &  \sum \limits_{E \in \mathcal{E}^0}   \| \big( (\mathbf {b}_N-\mathbf{b}_{h,N}) + \mathbf {b}_{h,N} \big) \cdot \jump{u_h^q} \|_{L^2(\Gamma; L^2(E))}  
			\, h_E^{1/2} \|\nabla v\|_{L^2(\Gamma;L^2(\Delta_E))}   \nonumber \\
			\lesssim    &   \left( \sum \limits_{K \in \mathcal{T}_h} \eta_{h,K}^2 + \eta_{\theta,K}^2 \right)^{1/2} \|v\|_{\xi}.
		\end{align}
	\end{subequations}
	Now, we bound the rest of terms in \eqref{prf7} in terms of the parametric estimator given in \eqref{totalerror}. 
	By $\mathcal{Y}^{q_n} \subset \mathcal{Y}^q \subset L^2(\Gamma)$ and \eqref{eq:l2proj},  we have for each $n=1, \ldots N$
	\begin{align*}
		& \lVert \Pi_{q}(a_N \, \nabla u_h^q)- a_N \, \nabla u_h^q  \rVert_{L^2(\Gamma; L^2(\mathcal{D}))}  \\
        & \leq   \underbrace{\lVert \Pi_{q}(a_N  \nabla u_h^q)- \Pi_{q_n}(a_N  \nabla u_h^q) \rVert_{L^2(\Gamma; L^2(\mathcal{D}))}}_{=0} + \lVert \Pi_{q_n}(a_N  \nabla u_h^q)- a_N  \nabla u_h^q \rVert_{L^2(\Gamma; L^2(\mathcal{D}))}.
	\end{align*}
	Then,
	\begin{equation}\label{prf10}
		N \lVert \Pi_{q}(a_N \nabla u_h^q)- a_N \nabla u_h^q  \rVert_{L^2(\Gamma; L^2(\mathcal{D}))} 
		\leq \sum \limits_{n=1}^N \lVert \Pi_{q_n}(a_N \nabla u_h^q)- a_N \nabla u_h^q \rVert_{L^2(\Gamma; L^2(\mathcal{D}))}.
	\end{equation}
\noindent Hence, the Cauchy--Schwarz inequality, the inequalities  \eqref{ineq:ScottZhang} and \eqref{prf10} with $h_E \leq h_K < 1$ give us
	\begin{subequations}\label{prf11}
		\begin{eqnarray}
			A_2 &=& \int \limits_{\Gamma}   \sum \limits_{K \in \mathcal{T}_h} \int \limits_{K} \hspace{-1mm}  \big(\Pi_{q}(a_N \nabla u_h^q ) - a_N \nabla u_h^q \big) \cdot  \nabla (v -I_hv)  \, dx \, \rho(\xi) d\xi \nonumber \\
			&\lesssim & \frac{1}{N}\sum \limits_{n=1}^N \lVert \Pi_{q_n}(a_N \nabla u_h^q)- a_N\nabla u_h^q \rVert_{L^2(\Gamma; L^2(\mathcal{D}))} \lVert \nabla v \rVert_{L^2(\Gamma; L^2(\mathcal{D}))} \nonumber \\ 
			&\lesssim & \frac{1}{N}\sum \limits_{n=1}^N \lVert \Pi_{q_n}(a_N \nabla u_h^q)- a_N \nabla u_h^q \rVert_{L^2(\Gamma; L^2(\mathcal{D}))} \, \|a_N^{-1/2}\|_{L^2(\Gamma; L^2(\mathcal{D}))} \|v\|_{\xi} \nonumber\\
			&\lesssim &   \left( \sum \limits_{K \in \mathcal{T}_h} \eta_{q,K}^2 \right)^{1/2} \,  \|v\|_{\xi},  
\end{eqnarray}
\begin{eqnarray} 
			A_3 &=& \int \limits_{\Gamma}  \sum \limits_{K \in \mathcal{T}_h} \int \limits_{K} \big(\Pi_q(\mathbf{b}_N \cdot \nabla u_h^q) - \mathbf{b}_N \cdot \nabla u_h^q \big) (v-I_h v)\, dx \, \rho(\xi) d\xi \nonumber \\
			&\lesssim &  \sum \limits_{K \in \mathcal{T}_h}\lVert \Pi_q(\mathbf{b}_N \cdot \nabla u_h^q) - \mathbf{b}_N \cdot \nabla u_h^q  \rVert_{L^2(\Gamma; L^2(K))} \, h_K \,\lVert \nabla v \rVert_{L^2(\Gamma;L^2(K))} \nonumber \\
			&\lesssim &  \frac{1}{N}\sum \limits_{n=1}^N \lVert \Pi_{q_n}(\mathbf{b}_N \cdot \nabla u_h^q) - \mathbf{b}_N \cdot \nabla u_h^q  \rVert_{L^2(\Gamma;L^2(\mathcal{D}))} \,
			\|a_N^{-1/2}\|_{L^2(\Gamma;L^2(\mathcal{D}))} \|v\|_{\xi} \nonumber\\
			&\lesssim &   \left( \sum \limits_{K \in \mathcal{T}_h} \eta_{q,K}^2 \right)^{1/2} \,  \|v\|_{\xi}, \\       
			A_4 &=& \int \limits_{\Gamma}  \sum \limits_{K \in \mathcal{T}_h}  \int \limits_{\partial K^{-} \backslash \partial \mathcal{D}} \hspace{-3mm} 
			\big(\Pi_{q}((\mathbf {b}_N \cdot \mathbf{n}_E) ((u_h^q)^e-u_h^q)) -(\mathbf {b}_N \cdot \mathbf{n}_E) ((u_h^q)^e-u_h^q) \big) \, \nonumber \\ 
            & & \hspace{2.5cm}\times (v -I_hv)   dx \, \rho(\xi) d\xi \nonumber \\
			&\lesssim & \sum \limits_{ E \in \mathcal{E}^0}  \| \Pi_q(\mathbf {b}_N   \cdot \jump{u_h^q}) - \mathbf{b}_N \cdot \jump{u_h^q} \|_{L^2(\Gamma;L^2(E))} \|v-I_hv\|_{L^2(\Gamma;L^2(E))}   \nonumber \\
			&\lesssim & \sum \limits_{ E \in \mathcal{E}^0}  \| \Pi_q(\mathbf {b}_N   \cdot \jump{u_h^q}) - \mathbf{b}_N \cdot \jump{u_h^q} \|_{L^2(\Gamma; L^2(E))} h_E^{1/2}  \|\nabla v\|_{L^2(\Gamma; L^2(\Delta_E))}   \nonumber\\
			&\lesssim &  \left( \sum \limits_{K \in \mathcal{T}_h} \eta_{q,K}^2 \right)^{1/2} \,  \|v\|_{\xi}.
		\end{eqnarray}
	\end{subequations}
	Inserting  the bounds  obtained in \eqref{prf4}, \eqref{prf63}, \eqref{prf9}, and \eqref{prf11} into \eqref{prf5}, we get
	\begin{equation}\label{prf12}
		\lVert u - u_h^c \rVert_{\xi} \lesssim \left( \sum \limits_{K \in \mathcal{T}_h} \eta_{h,K}^2 + \eta_{\theta,K}^2 + \eta_{q,K}^2 \right)^{1/2}.
	\end{equation}
	Last, we combine the results in \eqref{prf4} and \eqref{prf12} to get  the desired result.
\end{proof} 

\begin{remark}
	In our numerical implementations, we  propose an adaptive approach, based on  mesh refinement or parametric enrichment with polynomial degree adaption by following the upper bounds in Theorem~\ref{thm:reliable}. In order to increase the efficiency of the proposed approach, coarsening in the spatial domain or parametric index set can be imposed by considering a lower bound for the error in the a posteriori error analysis. By taking into account the simplicity of the underlying methodology, this issue will be addressed in the future studies.
\end{remark}


\section{Numerical experiments}\label{sec:num}
\subsection{The adaptive loop}\label{sec:loop}

A generic adaptive refinement/enrichment procedure for the numerical solution of the model problem  \eqref{eqn:m1} consists of successive loops of the sequence given 
in \eqref{loop}. Starting with a given triangulation $\mathcal{T}_h^0$ and an initial index set $\mathfrak{B}^0$, the adaptive procedure generates a sequence of 
triangulations $\mathcal{T}_h^{k} \subseteq \mathcal{T}_h^{k+1}$ and of index sets  $\mathfrak{B}^k \subseteq  \mathfrak{B}^{k+1}$. The \textbf{SOLVE} step (subroutine \textbf{solve})  corresponds to the numerical approximations of the statistical moments obtained by 
the stochastic discontinuous Galerkin numerical scheme in the current mesh and index set. We here solve the following linear system:
\begin{equation}\label{tensormtrx}
	\underbrace{\left( \sum_{i=0}^{N} \mathcal{G}_i \otimes \mathcal{K}_i \right)}_{\mathcal{A}} \, \mathbf{u} = 
	\underbrace{\left( \sum_{i=0}^{N} \mathbf{g}_i \otimes \mathbf{f}_i\right)}_{\mathcal{F}},
\end{equation}
where
$
\mathbf{u} =\left(
u_0, \ldots,u_{(\# (\mathfrak{B}^k)-1)}
\right)^T \;\; \hbox{with} \;\;  u_i \in \mathbb{R}^{N_d}, \;\; i=0,1,\ldots,(\# (\mathfrak{B}^k)-1)$ 
with  the degree of freedoms in the spatial discretization $N_d$.
Here, $\mathcal{K}_i \in \mathbb{R}^{N_{d} \times N_d}$  and  $\mathcal{G}_i \in \mathbb{R}^{(\# (\mathfrak{B}^k)) \times (\# (\mathfrak{B}^k))}$ 
represent the stiffness and stochastic matrices, respectively, whereas $\mathbf{f}_i \in \mathbb{R}^{N_d}$ and  $\mathbf{g}_i \in \mathbb{R}^{(\# (\mathfrak{B}^k))}$ 
are the right--hand side and stochastic vectors, respectively; see  \cite{PCiloglu_HYucel_2022} for the constructions of the matrices and vectors in details. 
In order to solve the matrix system \eqref{tensormtrx},  we use a generalized minimal residual (GMRES) solver in combination with the well-known 
mean-based preconditioner \cite{CEPowell_HCElman_2009}, that is,
\begin{equation*}
	\mathcal{P}_0=\mathcal{G}_0 \otimes \mathcal{K}_0,
\end{equation*}
where $ \mathcal{G}_0  $ and $ \mathcal{K}_0 $ are the mean stochastic and stiffness matrices, respectively. 
In the  \textbf{ESTIMATE} step (subroutine \textbf{estimate}), we compute the error estimator  
$\eta_T$ in \eqref{totalerror} contributed by the error due to the (generalized)  polynomial chaos discretization, the error due to the SIPG discretization 
of the (generalized)  polynomial chaos coefficients in the expansion, and the error due to the data oscillations. 
In the computation of spatial terms of 
the estimator, we just make Kronecker product of the 
spatial contributions with the stochastic matrix $\mathcal{G}_0$, whereas we use the stochastic mass matrix in the computation of parametric term  
by following the classical projection computation. In  \eqref{totalerror}, we replace the continuous coefficients $a_N$, $\mathbf{b}_N$  in the spatial domain 
by the discrete ones $a_{h,N}$, $\mathbf{b}_{h,N}$, respectively, to obtain a fully a posteriori error estimate. Recall that for a random coefficient $z(\boldsymbol{x},w)$, 
we have
\begin{eqnarray}\label{ineq:rand4}
	\|z -z_{h,N} \|_{L^2(\Gamma;L^2(\mathcal{D}))} &\leq&  \|z -z_{N} \|_{L^2(\Gamma;L^2(\mathcal{D}))} + \|z_{N} -z_{h,N} \|_{L^2(\Gamma;L^2(\mathcal{D}))} \nonumber \\
	&=& \left( \sum \limits_{i=N+1}^{\infty} \lambda_i \right)^{1/2} + \|z_{N} -z_{h,N} \|_{L^2(\Gamma;L^2(\mathcal{D}))}.
\end{eqnarray}
When the truncation error coming from KL expansion, that is, the first term in \eqref{ineq:rand4}, is ignored,  the spatial estimator may dominates the parametric estimator; see, e.g., 
\cite{ABespalov_LRocchi_2018,ABespalov_DSilvester_2016}. To avoid such a scenario and construct a balance between the errors emerged from spatial and stochastic domains, data oscillation term 
emerged from the truncation error of random coefficient on each element $K \in \mathcal{T}_h^k$, that is,
\begin{align}\label{eq:estKL}
	\eta_{z,K}^2 =&   \int \limits_{\Gamma} \left(  \rho_{h,K} \Big( \lVert\nabla \cdot (\Lambda_a \nabla u_h^q) \rVert_{L^2(K)}^2  
	+\lVert \Lambda_b \cdot \nabla u_h^q  \rVert_{L^2(K)}^2\Big) \right) \rho(\xi)d\xi \nonumber\\
	& + \int \limits_{\Gamma} \Big(  \sum \limits_{ E\in \partial K \backslash \partial \mathcal{D} } \rho_{h,E} \lVert \jump{\Lambda_a \nabla u_h^q } \rVert^2_{L^2(E)}\; \Big) \rho(\xi)d\xi  \nonumber \\
	& + \int \limits_{\Gamma}   \sum \limits_{E \in \partial K \backslash \partial \mathcal{D}} \hspace{-1mm}  \Big( \frac{\sigma \Lambda_a}{h_E} 
	+ \Lambda_b  + \rho_{h,E}  \Lambda_b^2  \Big)\lVert \jump{ u_h^q } \rVert^2_{L^2(E)}  \; \rho(\xi)d\xi
\end{align}
is added to the parametric estimator. In \eqref{eq:estKL}, for any random coefficient $z$, $\Lambda_z$  is computed by $\Lambda_z = \left( \sum \limits_{i=N+1}^{N_{\infty}} \lambda_i  \right)^{1/2}$, where $N_{\infty}$ is chosen as the KL expansion covering the 97\% of sum of the eigenvalues $\lambda_i$; see Table~\ref{tab:Ninfty} for varying the correlation length $\ell$.

\begin{table}[htp!]
	\caption{$N_{\infty}$ values for varying the correlation length $\ell$.}
	\label{tab:Ninfty}
	\centering
	\begin{tabular}{c|ccccc}
		$\ell $  & $ 0.25$    & $ 0.5$     & $ 1$       & $ 5$      & $ 10$    \\ \hline
		$N_{\infty}$        & $599$ & $370$ & $154$ & $15$ & $9$
	\end{tabular}
\end{table}

In the \textbf{MARK} step (subroutine \textbf{mark}), we specify the elements in $\mathcal{T}_h^k$ 
for refinement by using the  spatial estimators, that is,  $\eta_{h,K}$ and $\eta_{\theta,K}$, and by choosing a subset $\mathcal{M}_h^k \subset \mathcal{T}_h^k$ 
such that the following  bulk criterion  is satisfied for a fixed marking parameter $0 < \theta_h \leq 1$:
\begin{equation}\label{ineq:markphys}
	\theta_h 	\sum \limits_{ K \in \mathcal{T}_h^k } \big(\eta_{h,K}^2 + \eta_{\theta,K}^2 \big) \leq  
	\sum \limits_{ K \in \mathcal{M}_h^k \subset \mathcal{T}_h^k} \big(\eta_{h,K}^2 + \eta_{\theta,K}^2\big).
\end{equation}
On the other hand, to  build a minimal subset of marked indices, we first introduce a new index set \cite{MEigel_CJGittelson_CSchwab_EZander_2014}
\begin{equation}
	\mathfrak{R}^k := \{ q \in \mathfrak{U} \backslash \mathfrak{B}^k \, : \, q = q_{\mathfrak{B}} \pm \epsilon^{(n)} \;\,  
	\forall q_{\mathfrak{B}} \in \mathfrak{B}^k, \; \forall n=1, \ldots  N_{\mathfrak{B}} \},
\end{equation}
where the counter parameter is
\[
N_{\mathfrak{B}} =\left\{
\begin{array}{ll}
	0, & \hbox{if }  \mathfrak{B}= \{\mathbf{0}\}, \\
	\max\{\max(\text{supp}(q_{\mathfrak{B}}))\,: q_{\mathfrak{B}} \in \mathfrak{B}\backslash \{\mathbf{0}\}\}, & \hbox{otherwise},
\end{array}
\right.
\]
and $\epsilon^{(n)} = \big( \epsilon^{(n)}_1, \epsilon^{(n)}_2, \ldots   \big)$ is the Kronecker delta sequence satisfying $\epsilon^{(n)}_j = \delta_{nj}$ 
for all $j \in \mathbb{N}$. Then, by using the parametric error estimator $\eta_q$  for a fixed marking parameter $0 < \theta_q \leq 1$, we apply 
a bulk criterion in the parametric setting as follows
\begin{equation}\label{ineq:markpara}
	\theta_q \sum \limits_{ q \in \mathfrak{R}^k } \eta_{q}^2  \leq \sum \limits_{ q \in \mathfrak{M}^k \subseteq  \mathfrak{R}^k} \eta_{q}^2.
\end{equation}
Finally, the \textbf{REFINE} step (subroutine \textbf{refine}) either performs 
a refinement of the current triangulation or an enrichment of the current index set based on the dominant error estimator contributing 
to the total estimator. In the refinement of a triangulation, the marked elements are refined by longest edge bisection, whereas the elements 
of the marked edges are refined by bisection. On the other hand, the enrichment of the polynomial space is made by adding all marked indices 
to the current index, i.e., $\mathfrak{B}^{k+1} =  \mathfrak{B}^k \cup \mathfrak{M}^k$. Overall, the adaption process is summarized 
in Algorithm~\ref{alg:adap}, which repeats until the prescribed tolerance $TOL$  is met by the total estimator or the maximum number 
of total degree of freedoms $\max_{dof}$ is reached.

\begin{algorithm}[htp!]
	\scriptsize
	\caption{Adaptive stochastic discontinuous Galerkin algorithm}
	\label{alg:adap}
	\hspace*{\algorithmicindent}\textbf{Input:} initial mesh  $ \mathcal{T}_h^0 $, initial index set $ \mathfrak{B}^0$, marking parameters $\theta_h$ and $\theta_q$, data $f,u^d, a, \mathbf{b}$, tolerance threshold $TOL$, and maximum degree of freedom $\max_{dof}$. \\
	\vspace{-5mm}
	\begin{algorithmic}[1]
		\FOR{$ k = 0,1,2, \ldots $}
		\STATE{$  (u_h^q)^k = \textbf{solve}(\mathcal{T}_h^k, \mathfrak{B}^k, f, u^d, a, \mathbf{b})$}
		\STATE{$(\eta_h^2, \eta_{\theta}^2, \eta_q^2) = \textbf{estimate}((u_h^q)^k,\mathcal{T}_h^k, \mathfrak{B}^k, f, u^d, a, \mathbf{b})$}
		\IF{$ (\eta_T^2 \leq TOL)$ or  $(\# (\mathfrak{B}^k) \times N_d \leq \max_{dof})$}
		\STATE{\textbf{break;}}
		\ENDIF
		\STATE $\mathcal{M}_h^k =  \textbf{mark}(\mathcal{T}_h^k, \eta_h^2, \eta_{\theta}^2, \theta_h)$  and $\mathfrak{M}^k =  \textbf{mark}(\mathfrak{U}^k, \mathfrak{B}^k, \eta_q^2, \theta_q)$
		\IF{ $ (\eta_h^2 + \eta_{\theta}^2) \geq \eta_{q}^2 $}
		\STATE  $\mathcal{T}_h^{k+1} = \textbf{refine}(\mathcal{T}_h^k,\mathcal{M}_h^k)$
		\ELSE
		\STATE $\mathfrak{B}^{k+1} = \textbf{refine}(\mathfrak{B}^k,\mathfrak{M}^k)$
		\ENDIF
		\ENDFOR
	\end{algorithmic}
\end{algorithm}

\subsection{Numerical results}

To investigate the quality of the derived estimators in Section~\ref{sec:estimator} and the performance of the adaptive 
loop proposed in Section~\ref{sec:loop}, several benchmark  examples including a random diffusivity parameter, a random convectivity parameter, random diffusivity/convectivity parameters, and a random (jump) discontinuous diffusivity parameter, are provided in this section. We characterize the random coefficients by using exponential covariance function with the correlation length $\ell_n$  and the corresponding eigenpair $(\lambda_j, \phi_j)$ is given explicitly in \cite[pp. 295]{GJLord_CEPowell_TShardlow_2014}. Since the underlying random 
variables have chosen based on the uniform distribution  over $[-1,1]$, that is,  $ \xi_j \sim \mathcal{U}[-1,1]$ for $i=1,\ldots, N$, the stochastic basis functions are taken to be Legendre polynomials; see, Remark~\ref{sto_basis}. In the numerical implementations, the initial mesh and index set are  chosen as $ \mathcal{T}_h^0 $ with $N_d=384$ and $
\mathfrak{B}^0 = \{ (0,0,0,\ldots), (1,0,0,\ldots) \}$, respectively. Uniformly refined  meshes are constructed by dividing each triangle into four 
triangles, whereas uniformly refined index sets are generated by increasing the truncation number $N$ and polynomial degree in $\Psi$ by one unit. 
To test the effectivity of the derived estimator $\eta_T$ in \eqref{totalerror}, we compare the estimator with the reference error defined by
$
e^k_{ref} = u_{ref} - (u_h^q)^k,
$
where $u_{ref}$ is generated  by applying one step uniform mesh refinement to the final triangulation produced  from the Algorithm~\ref{alg:adap} and 
one adaptive enrichment  to the final index set. By using Galerkin orthogonality, we compute the effectivity indexes according to 
\begin{equation}\label{eff}
	\mathcal{I}_{eff} = \frac{\eta_T} {\|e^k_{ref}\|_{\xi}} = \frac{\eta_T}{\big(\|u_{ref}\|_{\xi} - \| (u_h^q)^k\|_{\xi}\big)^{1/2}}, \qquad \forall k \geq 0.
\end{equation}
Unless otherwise stated, in all simulations, we take the correlation length $\ell=1$ and standard deviation $\kappa=0.05$, and the Algorithm~\ref{alg:adap} is 
terminated when the total error estimator $\eta_T$ is reduced to $TOL = 1e-6$ or the  degrees of freedom is reached to maximum ($\max_{dof} = 10^7$). 
Further, we evaluate the performance of the our adaptive algorithm by measuring the total computational cost required to reach the stopping criteria. As defined in \cite{ABespalov_DPraetorius_LRocchi_MGuggeri_2019b, ABespalov_DPraetorius_LRocchi_MGuggeri_2019},  the computational cost is defined as the cumulative number of degrees of freedom over all iterations of the adaptive algorithm, that is,
\begin{equation}
	N_{cost} = \sum\limits_{k = 0}^{\# iter} \{\# \text{Total DOFs}\}^k,
\end{equation}
where $\{\# \text{Total DOFs}\}^k$ denotes the total number of degrees of freedom at the k-th iteration.

\subsubsection{Example with random diffusivity}\label{ex:rand_diff}

Our first test example is taken from \cite{KLee_HCElman_2017} and the data of the problem is as follows
\[
\mathcal{D} = [-1,1]^2, \quad f(\boldsymbol{x})=0, \quad \mathbf{b}(\boldsymbol{x})=(0,1)^T,
\]
with the nonhomogeneous Dirichlet boundary condition
\begin{equation}\label{nonDBC}
	u^d(\boldsymbol{x})=\begin{cases}
		u^d(x_1,-1)=x_1, & u^d(x_1,1)=0,\\
		u^d(-1,x_2)=-1, & u^d(1,x_2)=1.
	\end{cases}
\end{equation}
Here, the diffusion parameter $a(\boldsymbol{x}, \omega)$ is given in the form of $a(\boldsymbol{x}, \omega) = \nu z(\boldsymbol{x}, \omega)$, 
where $z(\boldsymbol{x}, \omega)$ is  a random variable having unity mean $\overline{z}(\boldsymbol{x}) =1$ and $\nu$ is the viscosity parameter. As $\nu$ decreases, 
the solution of the underlying problem  exhibits  boundary layer near $x_2 =1$, where the value of the solution changes in a dramatic way \cite{KLee_HCElman_2017}. 
Locally refined triangulations generated by the Algorithm~\ref{alg:adap} with the marking parameters $\theta_h=0.5$ and $\theta_q =0.5$ are given in  
Figure~\ref{Ex1_mesh}. It is observed that our estimator $\eta_T$   detects well the regions where the mean of the solution is not smooth enough.

\begin{figure}[htp!]
	\centering
	 \includegraphics[width=1\textwidth]{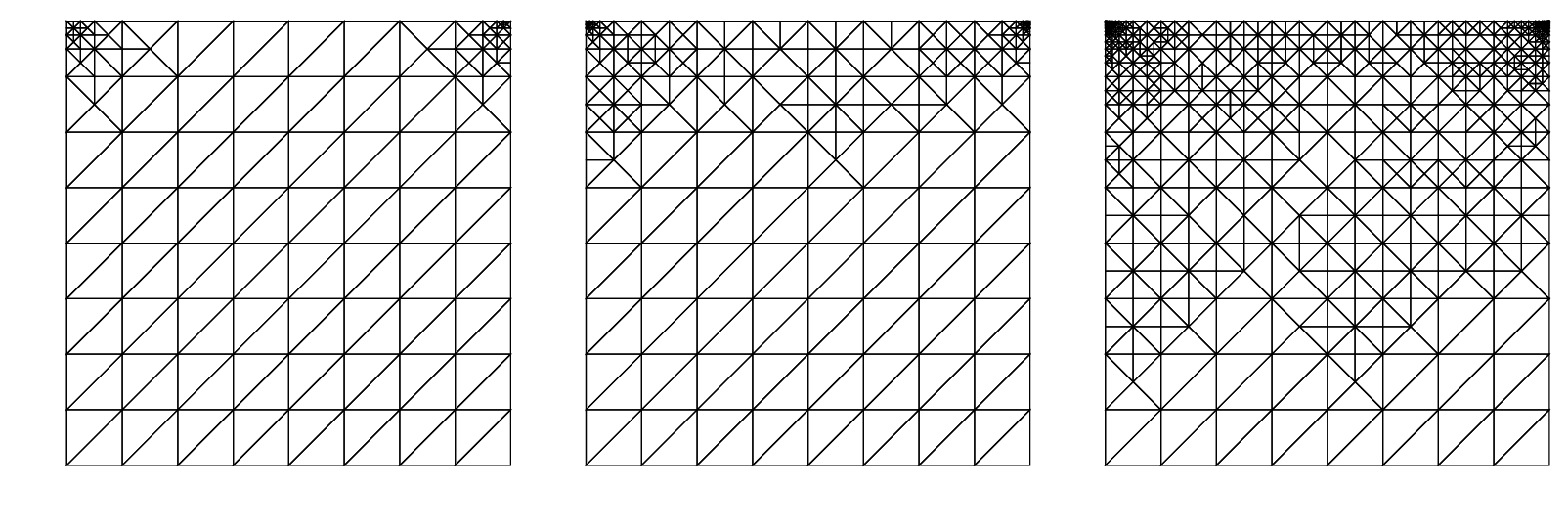} \\
	 \includegraphics[width=1\textwidth]{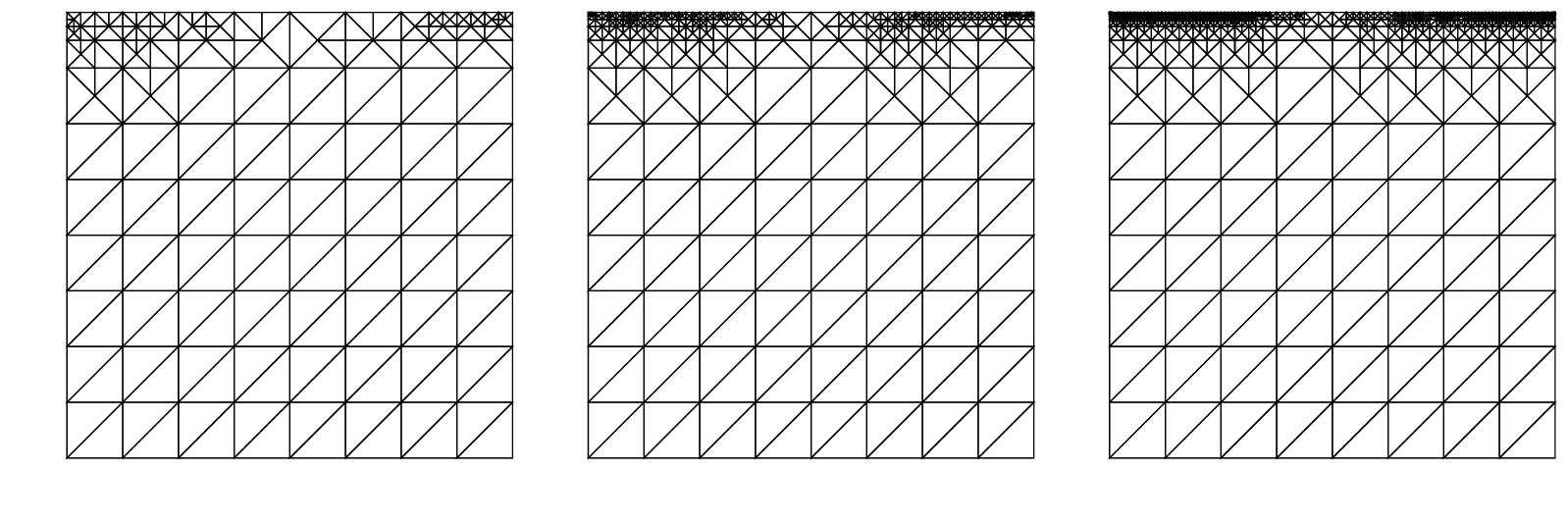}
	\caption{Example~\ref{ex:rand_diff}: Process of adaptively refined triangulations obtained by Algorithm~\ref{alg:adap} with the marking parameters 
		$\theta_h=0.5$ and  $\theta_q =0.5$ for the viscosity parameter $\nu=10^0$ (top)  and $\nu=10^{-2}$ (bottom).} \label{Ex1_mesh}	
\end{figure}

\begin{figure}[htp!]
	\centering
	\includegraphics[width=1\textwidth]{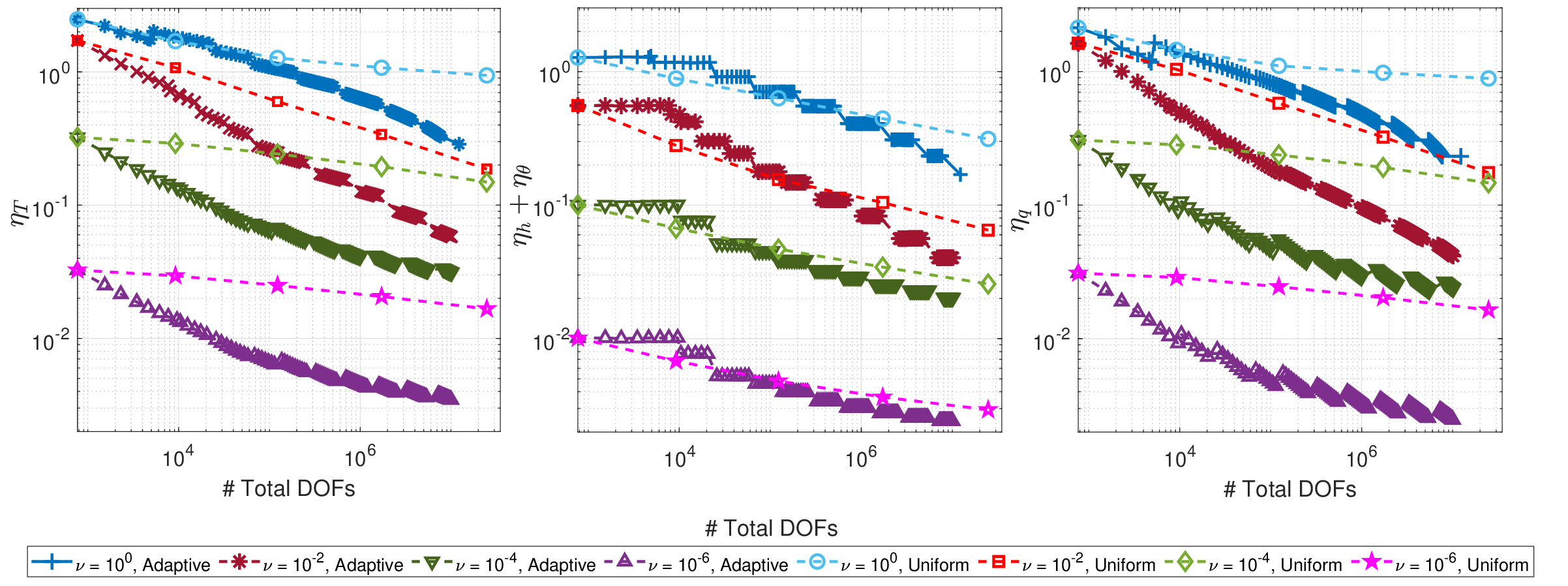}
	\caption{Example~\ref{ex:rand_diff}: Behaviour of error estimators  on the adaptively (with marking parameter $\theta_h = 0.5$, $\theta_q = 0.5$) and 
		uniformly generated spatial/parametric spaces  for different values of viscosity parameter $\nu$.} \label{Ex1_adapVSuniform_estimator}	
\end{figure}
\begin{figure}[htp!]
	\centering
	\includegraphics[width=1\textwidth]{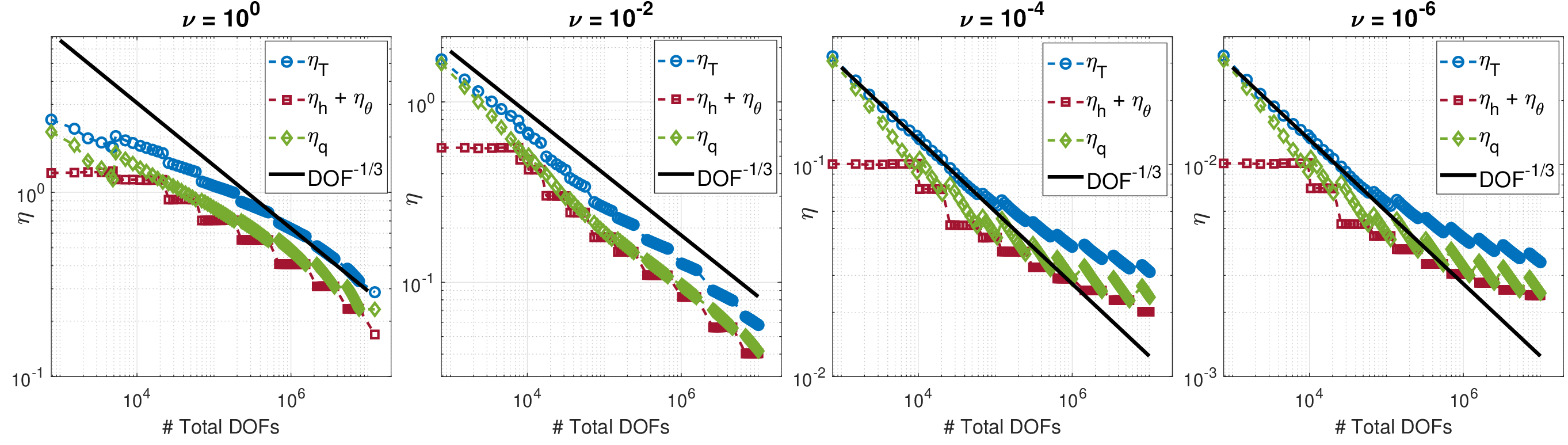}
	\caption{Example~\ref{ex:rand_diff}: Behaviour of error estimators on the adaptively generated spatial/parametric spaces with marking parameter 
		$\theta_h = 0.5$, $\theta_q = 0.5$  for different values of viscosity parameter $\nu$.}\label{Ex1_adap_estimator}	
\end{figure}

Figure~\ref{Ex1_adapVSuniform_estimator} displays the behavior of estimator $\eta_T$ and its spatial and parametric contributions 
$\eta_h + \eta_{\theta}$ and $\eta_q$, respectively, on the adaptively (with  the marking parameters $\theta_h=0.5$, $\theta_q =0.5$) and 
uniformly generated spatial/parametric spaces. 
Results on  Figure~\ref{Ex1_adapVSuniform_estimator} show that the total estimators on the adaptively generated spaces are superior than the ones on the uniformly generated spaces. From  Figure~\ref{Ex1_adap_estimator}, we also observe that the total error estimates decay with an overall rate of about $\mathcal{O}(DOF^{-1/3})$ even for the smaller values of $\nu$; see  \cite{MEigel_CJGittelson_CSchwab_EZander_2015, ABespalov_DPraetorius_LRocchi_MGuggeri_2019b} for the convergence analysis of adaptive stochastic Galerkin applied to elliptic problems. Next, the effect of marking parameters $\theta_q$ and $\theta_h$ is investigated in Tables~\ref{tab:Ex1_q} and~\ref{tab:Ex1_h}, respectively, for the viscosity parameter  $\nu=10^{-2}$. As expected, bigger values for the parameter $\theta_q$ result in more enrichment  in one loop, whereas smaller $\theta_q$  yields more optimal index  but more enrichment loops. It is also observed that the overall minimum total cost $N_{\text{cost}}$ is achieved for $\theta_q=0.8$ and $\theta_h=0.5$; but this configuration yields the lowest accuracy. When $\theta_q$ is fixed, although the iteration number decreases as $\theta_h$ increases, the process does not perform optimally; see also Figure~\ref{Ex1_theta}.  However, the adaptive algorithm converges  with an overall rate of about $\mathcal{O}(DOF^{-1/3})$ regardless of the marking parameters. Then, we investigate the effect of the correlation length $\ell$ and the standard deviation $\kappa$ on the estimator $\eta_T$ with the adaptively generated spatial/parametric spaces in Figure~\ref{Ex1_corre_kappa}. As expected, the performance of the estimator becomes worse when we decrease the value of correlation length $\ell$ and increase the value of the standard deviation $\kappa$. For instance, for $\kappa=2$, a spike happens due to probably a shortage of index set or mesh refinement. But as the degree of freedom increases, its convergence behavior  becomes better. Last, Figure~\ref{Ex1_eff}	displays the behaviour of the effectivity indices for various values of $\nu$. Even for the smaller values of $\nu$, the effectivity indices tends to a constant as iterations progress. 


\begin{table}[htp!]
	\tiny
	\centering
	\caption{Example~\ref{ex:rand_diff}: Results of  adaptive procedure with the viscosity parameter  $\nu=10^{-2}$  for  varying marking parameter $\theta_q$.}
	\label{tab:Ex1_q}
	\scalebox{0.9}{
		\begin{tabular}{ccc|cc|cc}
			$\theta_h = 0.5$                    &          & $\theta_q = 0.3$                                                                      &          & $\theta_q = 0.5$                                                                                      &          & $\theta_q = 0.8$                                                                                                      \\ \hline
			\multicolumn{1}{c|}{\# iter}                    &  & 105              &  & 99               &  & 82               \\ \hline
			\multicolumn{1}{c|}{\# Total DOFs}              &  & 10,043,280       &  & 10,109,880       &  & 10,481,064       \\ \hline
			\multicolumn{1}{c|}{\# $\mathfrak{B} $}         &  & 1392             &  & 2070             &  & 2146             \\ \hline
			\multicolumn{1}{c|}{$N_d$}                      &  & 7215             &  & 4884             &  & 4884             \\ \hline
			\multicolumn{1}{c|}{$N_{cost}$} &  & 185,422,335	  &  & 239,767,206  	&  & 156,010,791 \\ \hline	
			\multicolumn{1}{c|}{$\eta_T$}                   &  & 4.5662e-02       &  & 5.7960e-02       &  & 6.4655e-02       \\
			\multicolumn{1}{c|}{$\eta_h + \eta_{\theta}$} &  & 2.9286e-02       &  & 4.0376e-02       &  & 4.0335e-02       \\
			\multicolumn{1}{c|}{$\eta_q$} &  & 3.5034e-02       &  & 4.1583e-02       &  & 5.0531e-02  \\ \hline
			\multicolumn{1}{c|}{$\mathfrak{B}$} & iter = 1 & \begin{tabular}[c]{@{}c@{}}(0 0)\\ (1 0)\end{tabular}                                 & iter = 1 & \begin{tabular}[c]{@{}c@{}}(0 0)\\ (1 0)\end{tabular}                                                 & iter = 1 & \begin{tabular}[c]{@{}c@{}}(0 0)\\ (1 0)\end{tabular}                                                                 \\
			\multicolumn{1}{c|}{}               & iter = 2 & (0 1)                                                                                 & iter = 2 & \begin{tabular}[c]{@{}c@{}}(0 1)\\ (1 1)\end{tabular}                                                 & iter = 2 & \begin{tabular}[c]{@{}c@{}}(0 1)\\ (1 1)\end{tabular}                                                                 \\
			\multicolumn{1}{c|}{}               & iter = 3 & (0 0 1)                                                                               & iter = 3 & \begin{tabular}[c]{@{}c@{}}(0 0 1)\\ (0 1 1)\end{tabular}                                             & iter = 3 & \begin{tabular}[c]{@{}c@{}}(0 0 1)\\ (0 1 1)\\ (0 2 0)\end{tabular}                                                   \\
			\multicolumn{1}{c|}{}               & \multicolumn{2}{c|}{\vdots}          & \multicolumn{2}{c|}{\vdots}                                   & \multicolumn{2}{c}{\vdots}     \\
			\multicolumn{1}{l|}{}               & iter = 6 & \begin{tabular}[c]{@{}c@{}}(0 0 0 0 0 1)\\ (0 0 0 0 1 1)\\ (0 0 0 0 2 0)\end{tabular} & iter = 6 & \begin{tabular}[c]{@{}c@{}}(0 0 0 0 0 1)\\ (0 0 0 0 1 1)\\ (0 0 0 0 2 0)\\ (0 0 0 1 0 1)\end{tabular} & iter = 6 & \begin{tabular}[c]{@{}c@{}}(0 0 0 0 0 1)\\ (0 0 0 0 1 1)\\ (0 0 0 0 2 0)\\ (0 0 0 1 0 1)\\ (0 0 0 1 1 1)\end{tabular} \\
			
			\multicolumn{1}{c|}{}               & \multicolumn{2}{c|}{\vdots}          & \multicolumn{2}{c|}{\vdots}                                   & \multicolumn{2}{c}{\vdots}     \\
			\multicolumn{1}{l|}{}               & iter = 11 & \begin{tabular}[c]{@{}c@{}}(0 0 0 0 0 0 0 0 1)\\ (0 0 0 0 0 0 0 1 1)\\(0 0 0 0 0 0 0 2 0)\\(0 0 0 0 0 0 1 0 1)\end{tabular} 
			& iter = 11 & \begin{tabular}[c]{@{}c@{}}(0 0 0 0 0 0 0 0 1)\\ (0 0 0 0 0 0 0 1 1)\\(0 0 0 0 0 0 0 2 0)\\(0 0 0 0 0 0 1 0 1)\\(0 0 0 0 0 0 1 1 1)\end{tabular} 
			& iter = 11 & \begin{tabular}[c]{@{}c@{}}(0 0 0 0 0 0 0 0 1)\\ (0 0 0 0 0 0 0 1 1)\\(0 0 0 0 0 0 0 2 0)\\(0 0 0 0 0 0 1 0 1)\\(0 0 0 0 0 0 1 1 1)\\(0 0 0 0 0 0 1 2 0)\\(0 0 0 0 0 0 2 1 0)\end{tabular} \\
			
			\multicolumn{1}{c|}{}               & \multicolumn{2}{c|}{\vdots}          & \multicolumn{2}{c|}{\vdots}                                   & \multicolumn{2}{c}{\vdots}     \\
		\end{tabular}
	}
\end{table}

\begin{table}[htp!]
	\scriptsize
	\centering
	\caption{Example~\ref{ex:rand_diff}: Results of  adaptive procedure with the viscosity parameter  $\nu=10^{-2}$  for  varying marking parameter $\theta_h$.}
	\label{tab:Ex1_h}
	\begin{tabular}{cccc}
		$\theta_q = 0.5$                                & $\theta_h = 0.3$ & $\theta_h = 0.5$ & $\theta_h = 0.8$ \\ \hline
		\multicolumn{1}{c|}{\# iter}                    & 102              & 99               & 80               \\ \hline
		\multicolumn{1}{c|}{\# Total DOFs}              & 10,171,980       & 10,109,880       & 10,110,888       \\ \hline
		\multicolumn{1}{c|}{\# $\mathfrak{B} $}         & 2070             & 2070             & 1444             \\ \hline
		\multicolumn{1}{c|}{$N_d$}                      & 4914             & 4884             & 7002             \\ \hline
		\multicolumn{1}{c|}{$N_{cost}$} &   234,145,230	  & 239,767,206 	&   176,805,975	\\ \hline	
		\multicolumn{1}{c|}{$\eta_T$}                   & 5.7915e-02       & 5.7960e-02       & 5.2630e-02       \\
		\multicolumn{1}{c|}{$\eta_h + \eta_{\theta}$} & 4.0556e-02       & 4.0376e-02       & 3.4721e-02       \\
		\multicolumn{1}{c|}{$\eta_q$} & 4.1344e-02       & 4.1583e-02       & 3.9552e-02      
	\end{tabular}
\end{table}

\begin{figure}[htp!]
	\centering
	\includegraphics[width=0.70\textwidth]{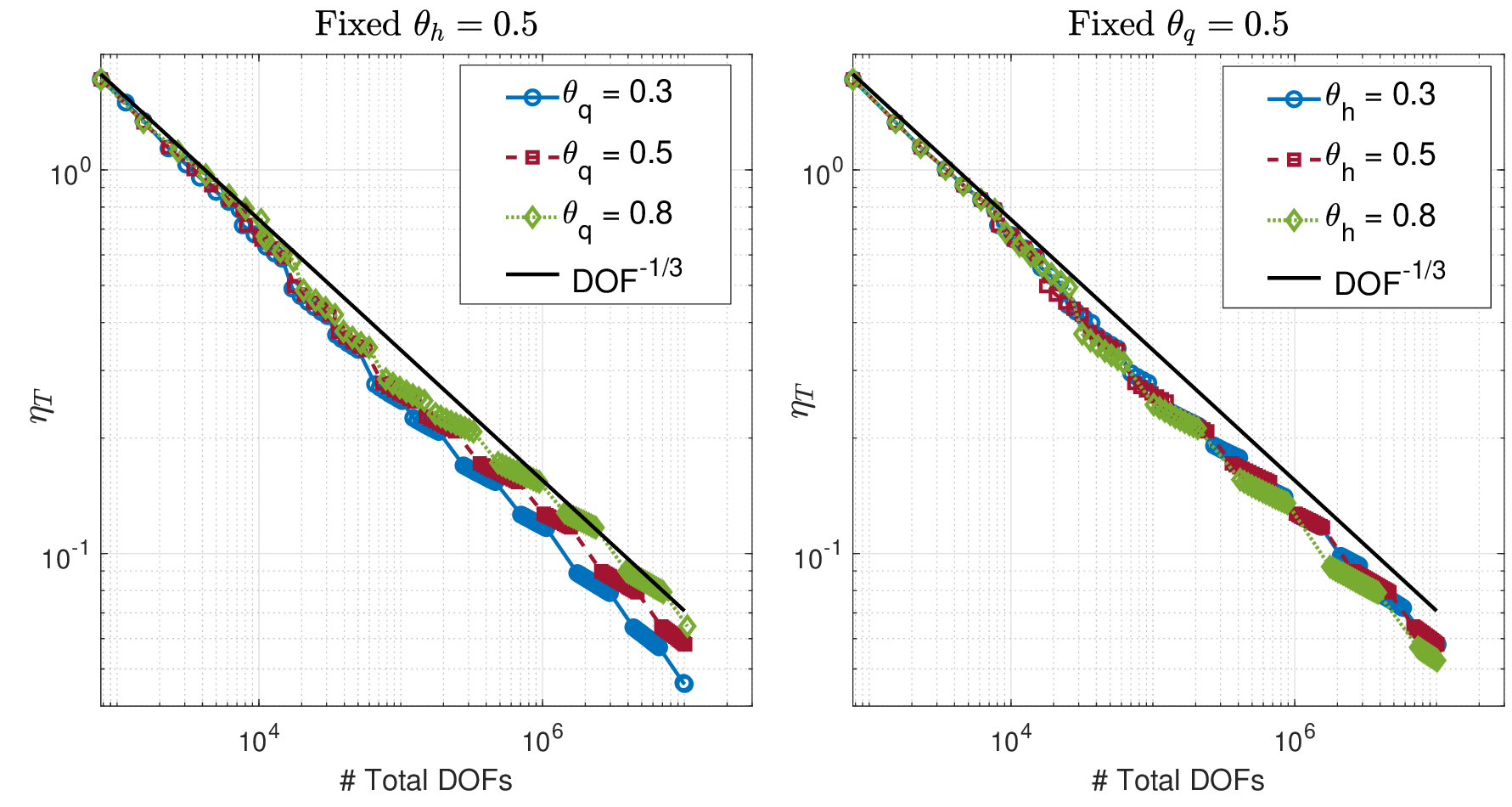}
	\caption{Example~\ref{ex:rand_diff}: Effect of the marking parameters $\theta_q$ (left) and $\theta_h$ (right) on the behaviour of estimator $\eta_T$  for the viscosity parameter $\nu=10^{-2}$.}\label{Ex1_theta}		
\end{figure}

\begin{figure}[htp!]
	\centering
	\includegraphics[width=0.70\textwidth]{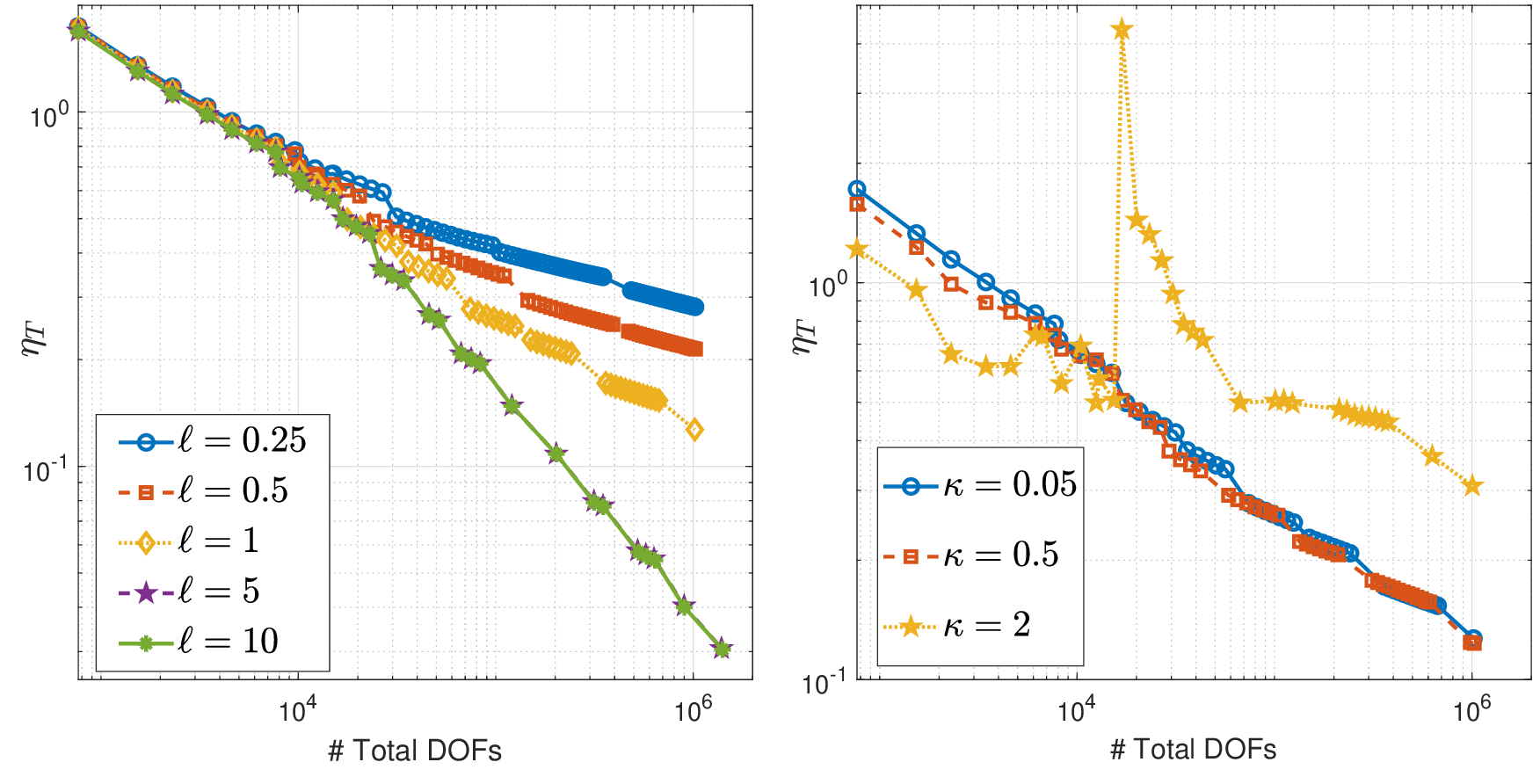}
	\caption{Example~\ref{ex:rand_diff}: Effect of the correlation length $\ell$ (left) and the standard deviation $\kappa$ (right) on the behaviour of estimator $\eta_T$ with adaptively generated spatial/parametric spaces for the viscosity parameter $\nu = 10^{-2}$.}\label{Ex1_corre_kappa}		
\end{figure}

\begin{figure}[htp!]
	\centering
	\includegraphics[width=1\textwidth]{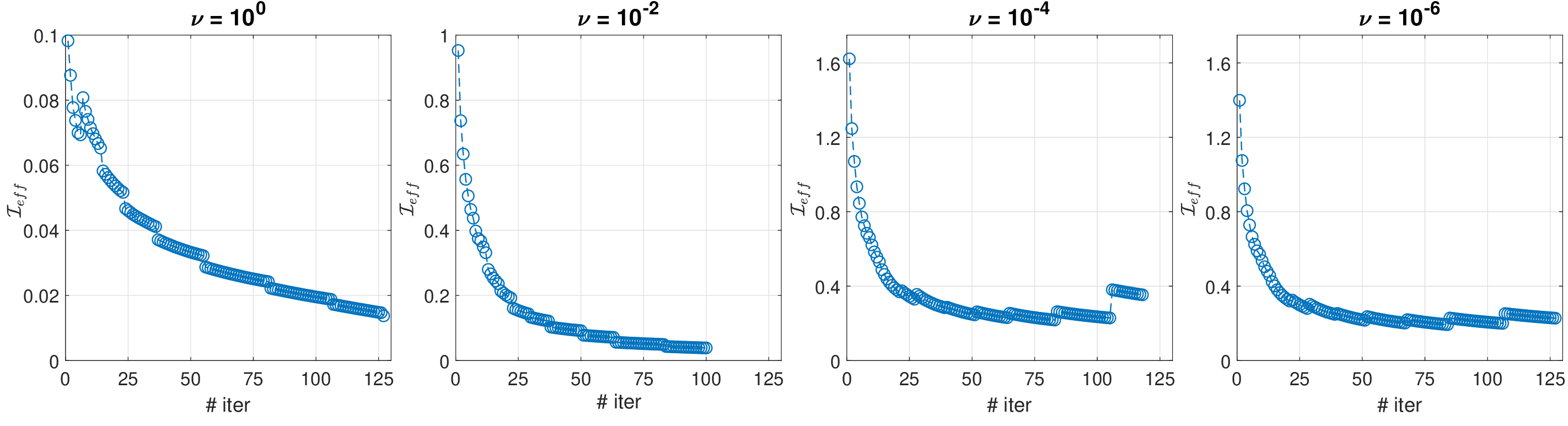}
	\caption{Example~\ref{ex:rand_diff}: Effectivity indices with $\theta_h = 0.5$, $\theta_q = 0.5$  for different values of the viscosity parameter $\nu$.}\label{Ex1_eff}		
\end{figure}

\subsubsection{Example with random convectivity}\label{ex:rand_conv}
Now, we consider  a convection diffusion equation with random convection coefficient in the spatial domain $\mathcal{D} = [-1,1]^2$ with the homogeneous Dirichlet boundary 
condition on $\partial \mathcal{D}$. To be precise, the problem  data is as follows
\[
a(\boldsymbol{x},\omega)=\nu > 0, \;  f(\boldsymbol{x})=0.5,\; \hbox{and} \;  \mathbf{b}(\boldsymbol{x},\omega)= \left(  z(\boldsymbol{x},\omega), z(\boldsymbol{x},\omega)\right)^T  \;  \hbox{with} \; \overline{z}(\boldsymbol{x}) = 1.
\]

Figure~\ref{Ex2_mean} displays the mean of the computed discrete solutions for various values of viscosity parameter $\nu$. Adaptively generated 
triangulations obtained by Algorithm~\ref{alg:adap} for the different values of viscosity $\nu$ are also displayed in Figure~\ref{Ex2_mesh}. As expected, 
most refinements occur around the boundaries $x_1 =1$ and $x_2=1$, where the solution exhibits boundary layers for the smaller values of $\nu$. Figure~\ref{Ex2_adapVSuniform_estimator}  shows that estimators exhibit a better convergence behavior for each value of viscosity parameter $\nu$ on the adaptively 
(with  the marking parameters $\theta_h=0.5$, $\theta_q =0.5$) than the ones on uniformly generated spatial/parametric spaces. In addition, the overall convergence 
rate of the total estimator $\eta_T$ is about  $\mathcal{O}(DOF^{-1/3})$ for the smaller values of the viscosity $\nu$, see Figure~\ref{Ex2_adap_estimator}, as the previous Example~\ref{ex:rand_diff}. 

\begin{figure}[htp!]
	\centering
	\includegraphics[width=1\textwidth]{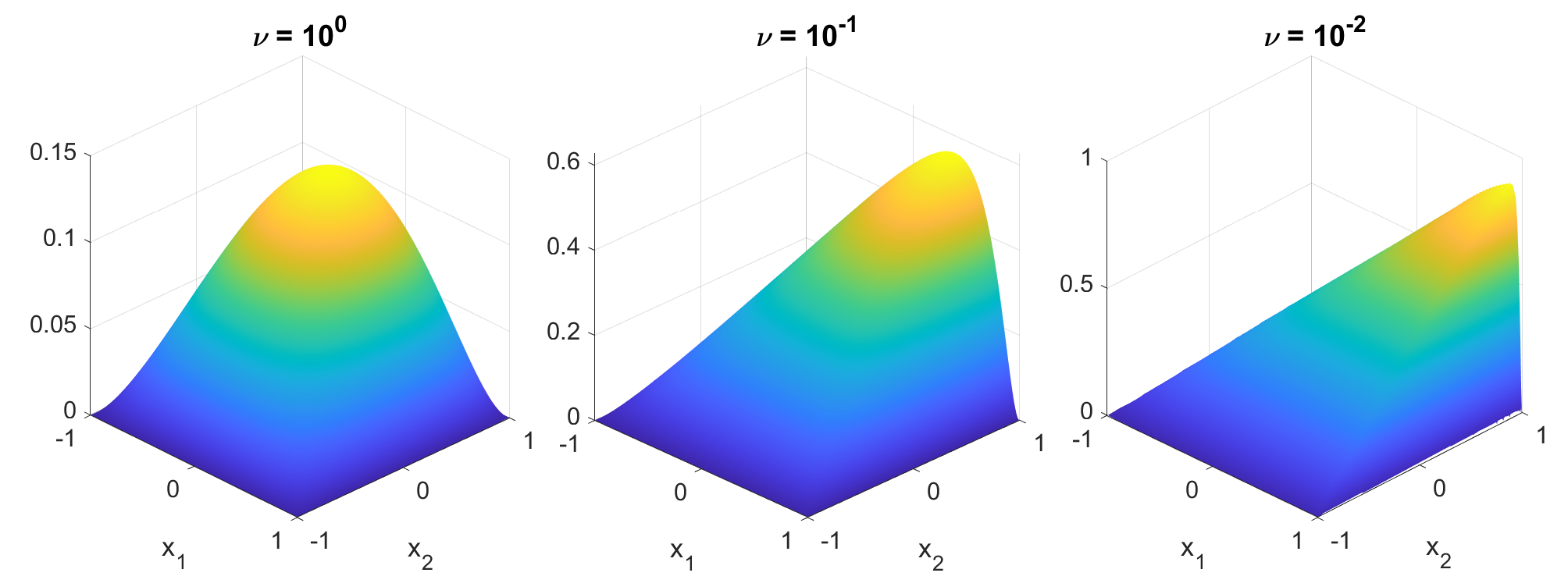}
	\caption{Example~\ref{ex:rand_conv}: Mean of SDG solutions for different values of viscosity parameter $\nu$.}
	\label{Ex2_mean}		
\end{figure}

\begin{figure}[htp!]
	\centering
	\includegraphics[width=1\textwidth]{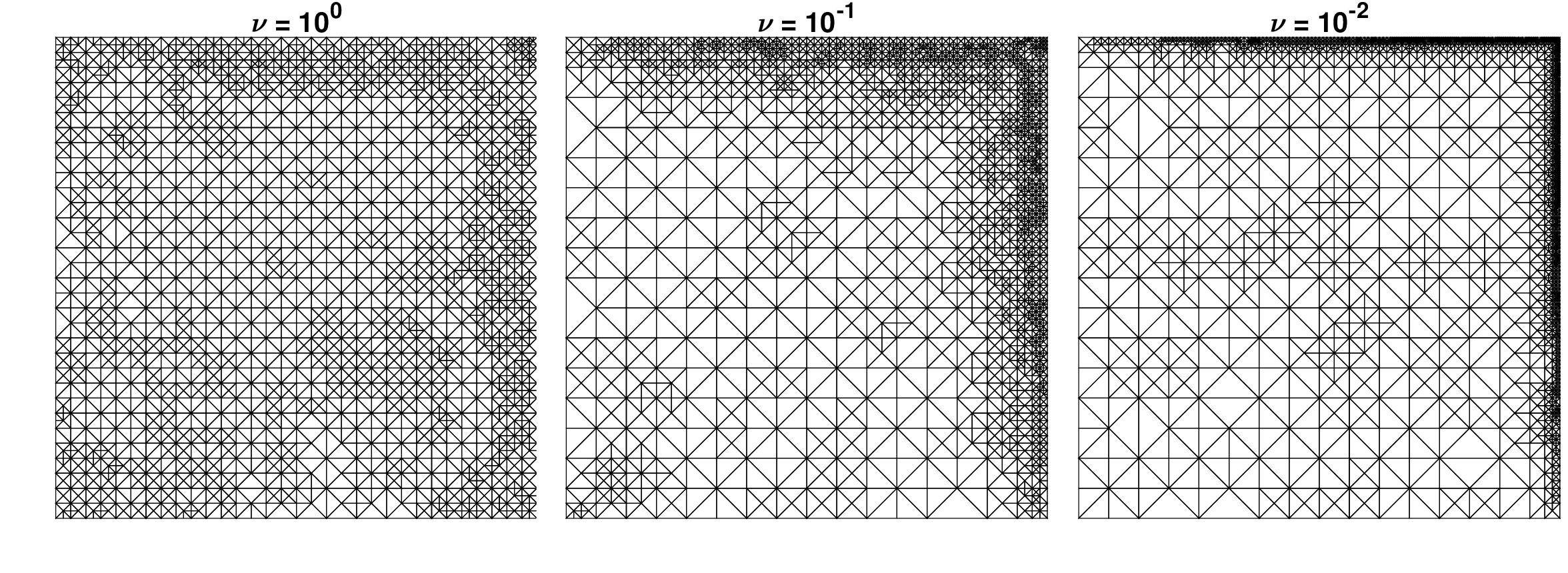}
	\caption{Example~\ref{ex:rand_conv}: Adaptively refined triangulations obtained by Algorithm~\ref{alg:adap} with the marking parameters $\theta_h=0.5$, $\theta_q =0.5$ for 
		the viscosity parameter $\nu=10^0 \hbox{ with } iter = 8, N_d = 10593$ (left), $\nu=10^{-1} \hbox{ with }  iter= 30, N_d = 11385$ (middle),  and $\nu=10^{-2} \hbox{ with }  iter = 65, N_d=13062$ (right).} \label{Ex2_mesh}	
\end{figure}

\begin{figure}[htp!]
	\centering
	\includegraphics[width=1\textwidth]{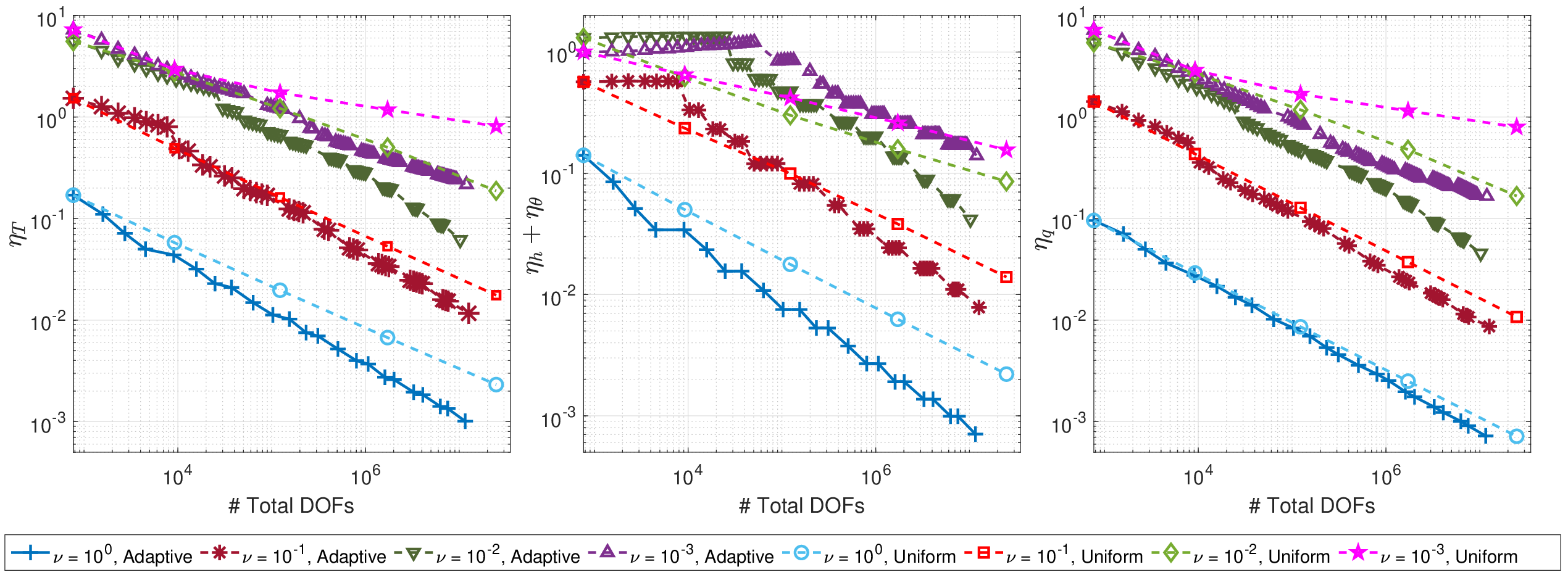}
	\caption{Example~\ref{ex:rand_conv}: Behaviour of error estimators  on the adaptively (with marking parameter $\theta_h = 0.5$, $\theta_q = 0.5$) and 
		uniformly generated spatial/parametric spaces  for different values of viscosity parameter $\nu$.} \label{Ex2_adapVSuniform_estimator}	
\end{figure}

\begin{figure}[htp!]
	\centering
	\includegraphics[width=1\textwidth]{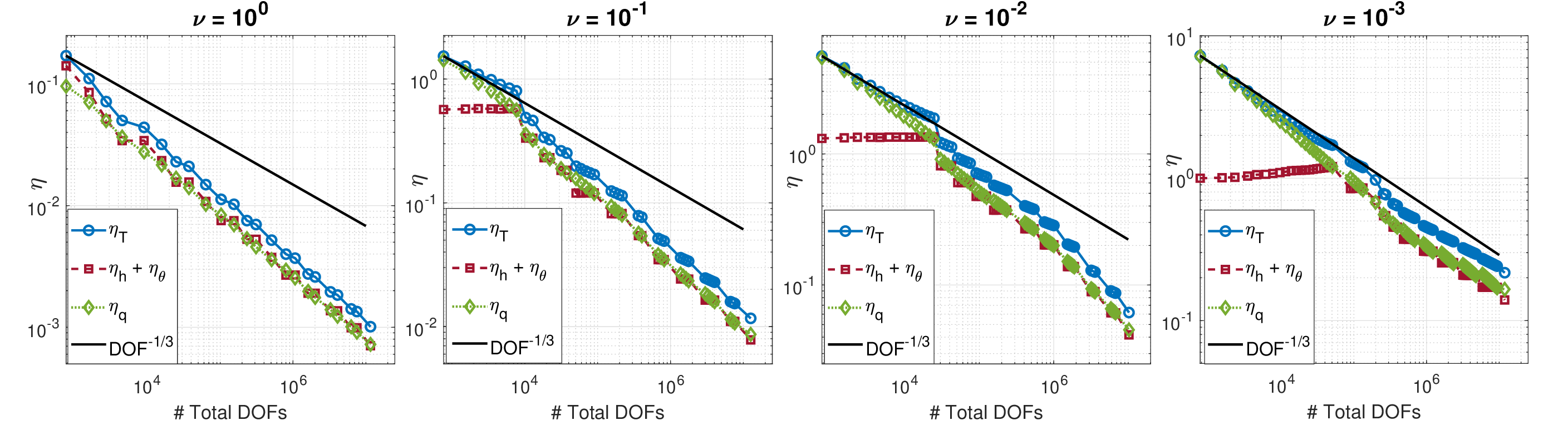}
	\caption{Example~\ref{ex:rand_conv}: Behaviour of error estimators on the adaptively generated spatial/parametric spaces with marking parameter $\theta_h = 0.5$, $\theta_q = 0.5$  
		for different values of viscosity parameter $\nu$.}\label{Ex2_adap_estimator}	
\end{figure}

\begin{table}[htp!]
	\tiny
	\centering
	\caption{Example~\ref{ex:rand_conv}: Results of adaptive procedure with the viscosity parameter  $\nu=10^{-2}$  for varying marking parameter  $\theta_q $.}
	\label{tab:Ex2_q}
	\scalebox{0.9}{
		\begin{tabular}{ccc|cc|cc}
			$\theta_h = 0.5$                            & & $\theta_q = 0.3$  & & $\theta_q = 0.5$   & & $\theta_q = 0.8$  \\ \hline
			\multicolumn{1}{c|}{\# iter}                & & 78   			  & & 64  				 & & 59 \\ \hline
			\multicolumn{1}{c|}{\# Total DOFs}          & & 12,880,665        & & 10,240,608 		 & & 12,735,090\\ \hline
			\multicolumn{1}{c|}{\# $\mathfrak{B} $}     & & 721               & & 784   			 & & 1010 \\ \hline
			\multicolumn{1}{c|}{$N_d$}                  & & 17865             & & 13062 			 & & 12609   \\ \hline
			\multicolumn{1}{c|}{$N_{cost}$}  & & 135,575,760	 & & 76,888,125 	& & 87,144,246 \\ \hline	
			\multicolumn{1}{c|}{$\eta_T$}               & & 4.7329e-02        & & 6.1645e-02         & & 6.4343e-02   \\
			\multicolumn{1}{c|}{$\eta_h + \eta_\theta$} & & 3.1194e-02        & & 4.1618e-02  & & 4.3911e-02   \\
			\multicolumn{1}{c|}{$\eta_q$}               & & 3.5594e-02        & & 4.5476e-02  & & 4.7030e-02\\ \hline
			\multicolumn{1}{c|}{$\mathfrak{B}$}         & iter = 1 & \begin{tabular}[c]{@{}c@{}}(0 0)\\  (1 0)\end{tabular}  & iter = 1 & \begin{tabular}[c]{@{}c@{}}(0 0)\\  (1 0)\end{tabular} & iter = 1 & \begin{tabular}[c]{@{}c@{}}(0 0)\\  (1 0)\end{tabular} \\
			\multicolumn{1}{c|}{}                       & iter = 2 & (0 1)  & iter = 2 & \begin{tabular}[c]{@{}c@{}}(0 1)\\  (1 1)\end{tabular} & iter = 2 & \begin{tabular}[c]{@{}c@{}}(0 1)\\  (1 1)\end{tabular}  \\
			\multicolumn{1}{c|}{}                       & iter = 3 & (0 0 1) & iter = 3 & \begin{tabular}[c]{@{}c@{}}(0 0 1)\\  (0 1 1)\end{tabular}  & iter = 3 & \begin{tabular}[c]{@{}c@{}}(0 0 1)\\  (0 1 1)\\  (0 2 0)\end{tabular} \\
			\multicolumn{1}{c|}{}                       &          & $\vdots$   &          & $\vdots$  &     & $\vdots$   \\
			\multicolumn{1}{c|}{}                       & iter = 6 & \begin{tabular}[c]{@{}c@{}}(0 0 0 0 0 1)\\ (0 0 0 0 1 1)\end{tabular}   & iter = 6 & \begin{tabular}[c]{@{}c@{}}(0 0 0 0 0 1)\\  (0 0 0 0 1 1)\\  (0 0 0 0 2 0)\\  (0 0 0 1 0 1)\end{tabular}  & iter = 6 & \begin{tabular}[c]{@{}c@{}}(0 0 0 0 0 1)\\  (0 0 0 0 1 1)\\  (0 0 0 0 2 0)\\  (0 0 0 1 0 1)\\  (0 0 0 1 1 1)\end{tabular}  \\
			\multicolumn{1}{c|}{}                       &          & $\vdots$   &          & $\vdots$  &     & $\vdots$ \\
			\multicolumn{1}{c|}{}                       & iter = 9 & \begin{tabular}[c]{@{}c@{}}( 0 0 0 0 0 0 0 0 1)\\ (0 0 0 0 0 0 0 1 1)\\ (0 0 0 0 0 0 0 2 0)\end{tabular} & iter = 9 & \begin{tabular}[c]{@{}c@{}}(0 0 0 0 0 0 0 0 1)\\ (0 0 0 0 0 0 0 1 1)\\ (0 0 0 0 0 0 0 2 0)\\ (0 0 0 0 0 0 1 0 1)\\ (0 0 0 0 0 0 1 1 1)\end{tabular} & iter = 9 & \begin{tabular}[c]{@{}c@{}}(0 0 0 0 0 0 0 0 1)\\ (0 0 0 0 0 0 0 1 1)\\ (0 0 0 0 0 0 0 2 0)\\ (0 0 0 0 0 0 1 0 1)\\ (0 0 0 0 0 0 1 1 1)\\ (0 0 0 0 0 0 1 2 0)\\ (0 0 0 0 0 0 2 0 1)\\ (0 0 0 0 0 0 2 1 0)\end{tabular} \\
			\multicolumn{1}{c|}{}                       &          & $\vdots$  &          & $\vdots$   &          & $\vdots$
		\end{tabular}
	}
\end{table}

\begin{table}[htp!]
	\scriptsize
	\centering
	\caption{Example~\ref{ex:rand_conv}: Results of  adaptive algorithm with the viscosity parameter  $\nu=10^{-2}$  for  varying marking parameter $\theta_h$.}
	\label{tab:Ex2_h}
	\begin{tabular}{cccc}
		$\theta_q = 0.5$                                & $\theta_h = 0.3$ & $\theta_h = 0.5$ & $\theta_h = 0.8$ \\ \hline
		\multicolumn{1}{c|}{\# iter}                    & 75               & 64               & 55               \\ \hline
		\multicolumn{1}{c|}{\# Total DOFs}              & 10,265,400       & 10,240,608       & 11,253,450       \\ \hline
		\multicolumn{1}{c|}{\# $\mathfrak{B} $}         & 900              & 784              & 650             \\ \hline
		\multicolumn{1}{c|}{$N_d$}                      & 11406            & 13062            & 17313             \\ \hline
		\multicolumn{1}{c|}{$N_{cost}$} &   118,714,650	  & 76,888,125 	&   65,584,812	\\ \hline
		\multicolumn{1}{c|}{$\eta_T$}                   & 6.6956e-02       & 6.1645e-02       & 5.2375e-02       \\
		\multicolumn{1}{c|}{$\eta_h + \eta_{\theta}$} & 4.6923e-02       & 4.1618e-02    	  & 3.5066e-02       \\
		\multicolumn{1}{c|}{$\eta_q$} & 4.7763e-02       & 4.5476e-02       & 3.8905e-02     
	\end{tabular}
\end{table}

\begin{figure}[htp!]
	\centering
	\includegraphics[width=0.70\textwidth]{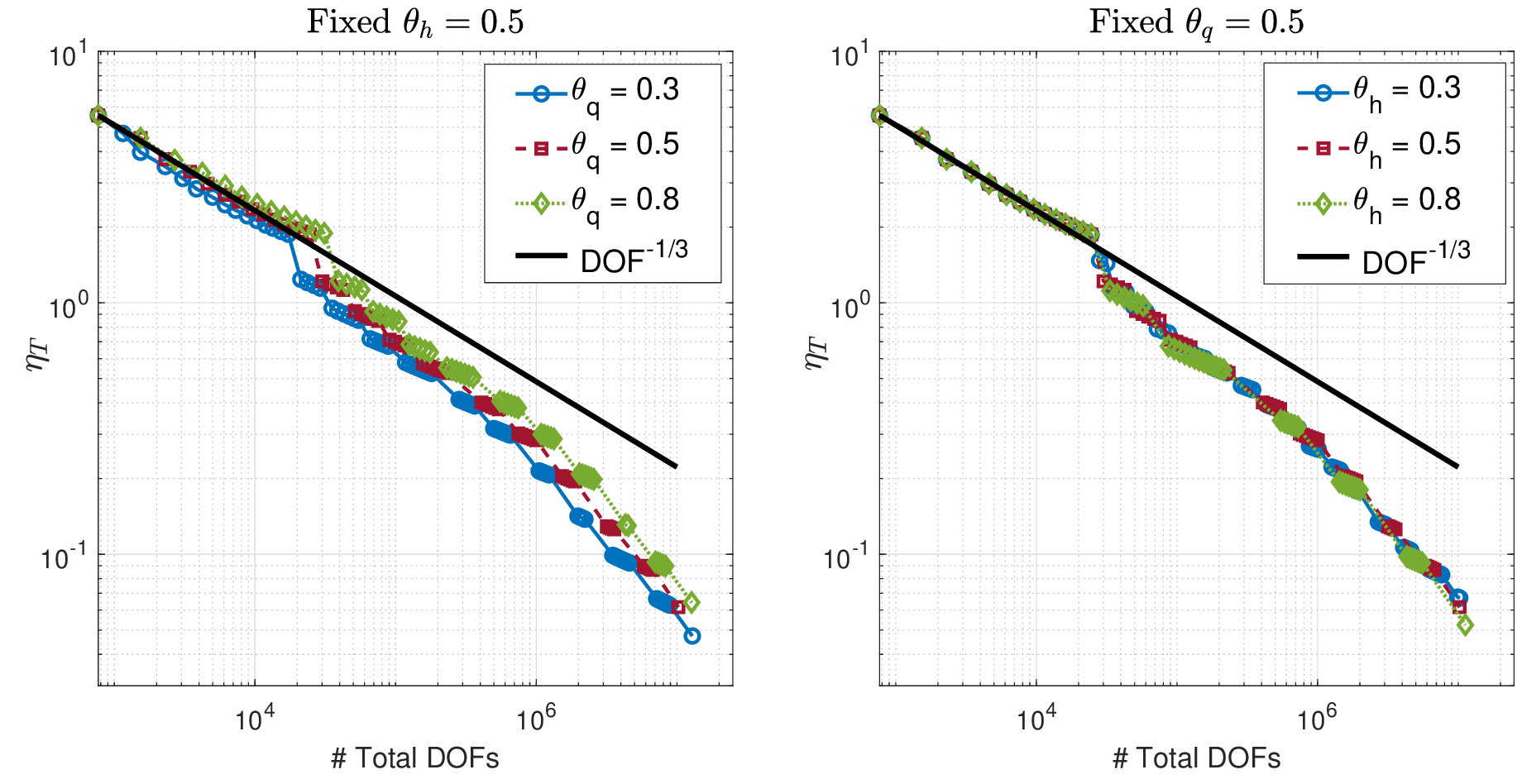}
	\caption{Example~\ref{ex:rand_conv}: Effect of the marking parameters $\theta_q$ (left) and $\theta_h$ (right) on the behaviour of estimator $\eta_T$  for the viscosity 
		parameter $\nu=10^{-2}$.}\label{Ex2_theta}		
\end{figure}

Next, Table~\ref{tab:Ex2_q} shows the results of adaptive algorithm with the viscosity parameter  $\nu=10^{-2}$  for varying marking parameter  $\theta_q $.  When the value of the marking parameter $\theta_q$ is increased, we observe that the number of iteration decreases, while the size of index set $\mathfrak{B}$ increases. However, as the previous example, we could not obtain an optimal process for the making parameter $\theta_h$; see, Table~\ref{tab:Ex2_h} and Figure~\ref{Ex2_theta}. Additionally, the highest accuracy is obtained when $\theta_h = 0.5$ and $\theta_q = 0.3$, but it requires the most computation cost. The performance of the estimator $\eta_T$ on adaptively generated spatial/parametric spaces for different values of  the correlation length $\ell$ and the standard deviation $\kappa$ is given in Figure~\ref{Ex2_corre_kappa}. Last, the behaviour of the effectivity indices  is displayed in Figure~\ref{Ex2_eff}.


\begin{figure}[htp!]
	\centering
	\includegraphics[width=0.70\textwidth]{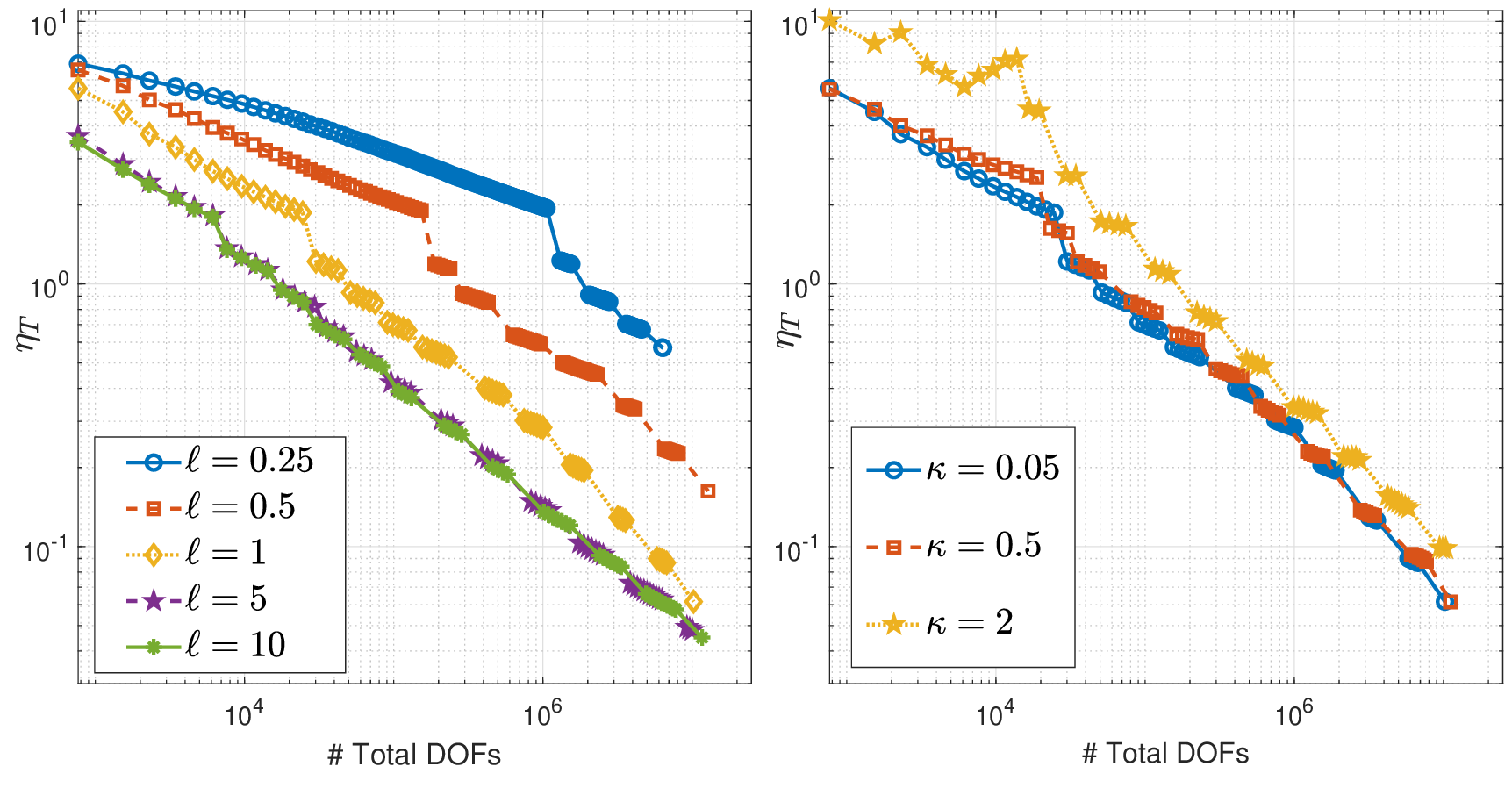}
	\caption{Example~\ref{ex:rand_conv}: Effect of the correlation length $\ell$ (left) and the standard deviation $\kappa$ (right) on the behaviour of estimator $\eta_T$ with 
		adaptively generated spatial/parametric spaces for the viscosity parameter $\nu = 10^{-2}$.}\label{Ex2_corre_kappa}		
\end{figure}

\begin{figure}[htp!]
	\centering
	\includegraphics[width=1\textwidth]{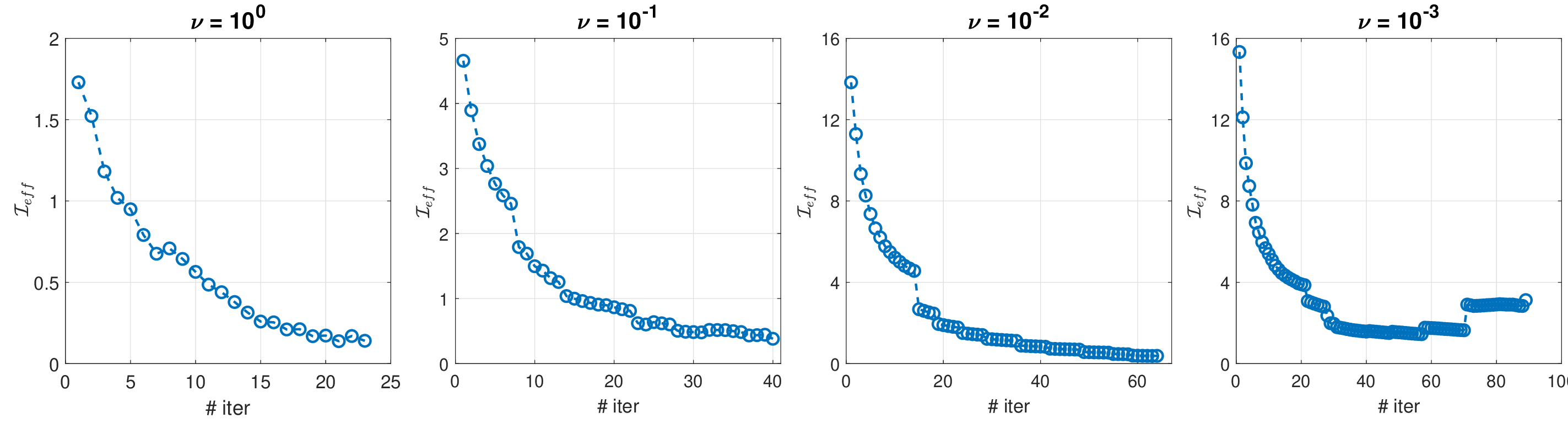}
	\caption{Example~\ref{ex:rand_conv}: Effectivity indices with $\theta_h = 0.5$, $\theta_q = 0.5$  for different values of the viscosity parameter $\nu$.}\label{Ex2_eff}		
\end{figure}


\begin{figure}[htp!]
	\centering
	\includegraphics[width=1\textwidth]{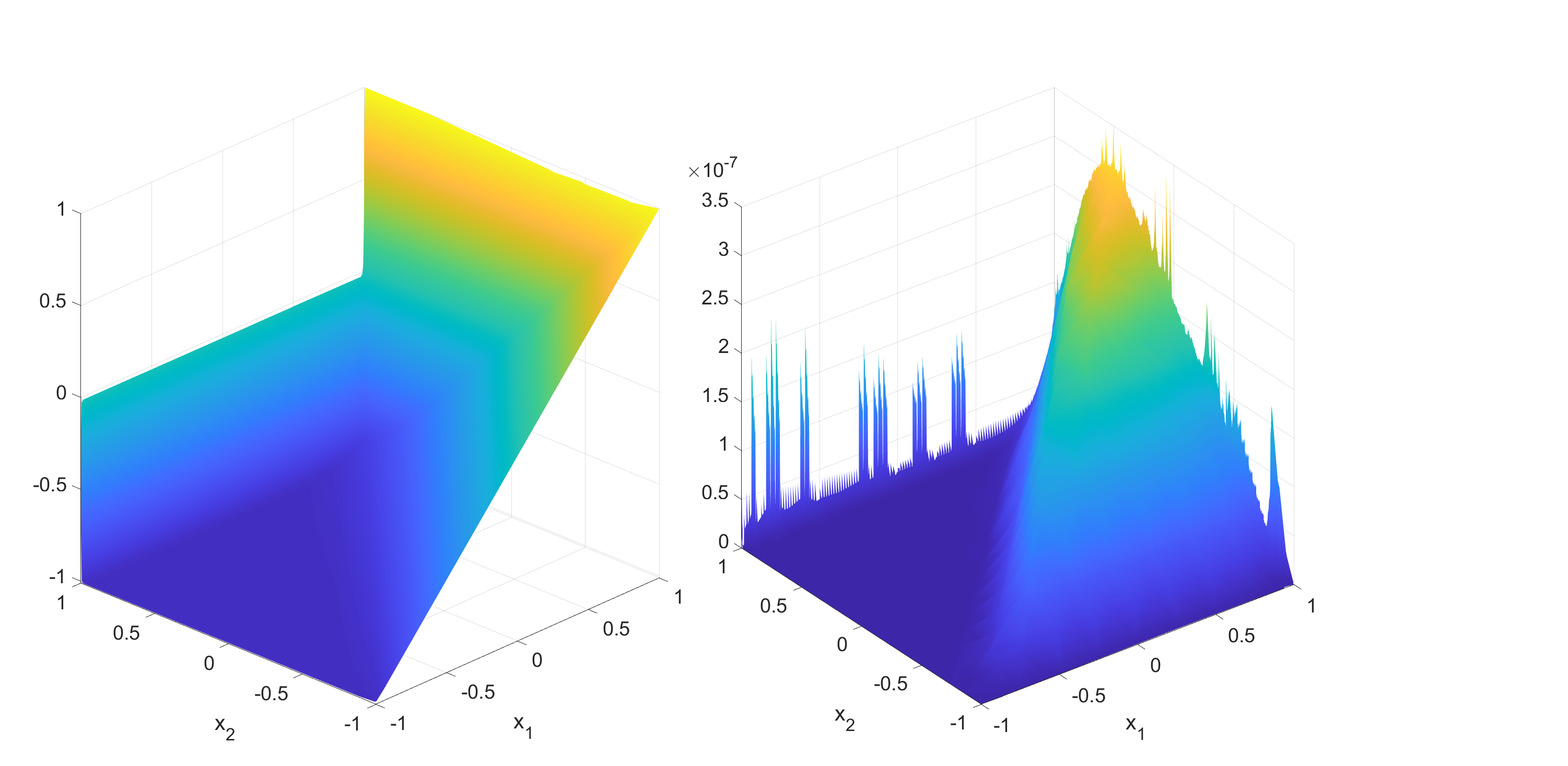}
	\caption{Example~\ref{ex:rand_both}: Mean (left) and variance (right) of SDG solutions obtained from Algorithm \ref{alg:adap} with the marking parameters $\theta_h=0.5$, $\theta_q =0.5$.}\label{Ex3_mean}		
\end{figure}

\begin{figure}[htp!]
	\centering
	\includegraphics[width=1\textwidth]{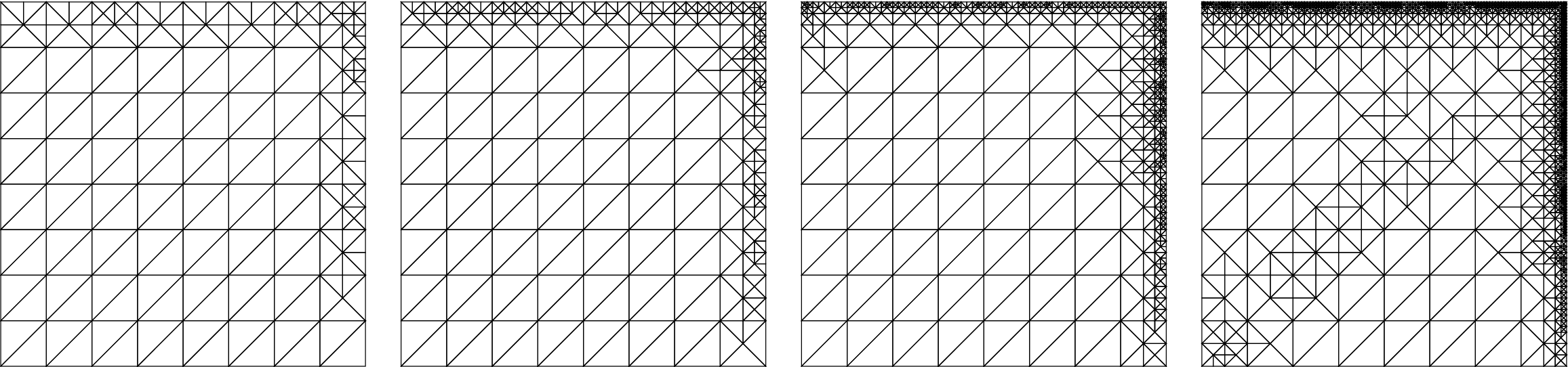} 
	\caption{Example~\ref{ex:rand_both}: Process of adaptively refined triangulations obtained by Algorithm~\ref{alg:adap} with the marking parameters $\theta_h=0.5$, $\theta_q =0.5$.} \label{Ex3_mesh}	
\end{figure}

\subsubsection{Example with random diffusivity and convectivity}\label{ex:rand_both}
Next, we consider our model problem \eqref{eqn:m1} with both random diffusion and convection coefficients on the spatial domain $\mathcal{D} = [-1,1]^2$ with the nonhomogeneous Dirichlet boundary conditions given in \eqref{nonDBC}. Rest of the data  is as follows
\[
a(\boldsymbol{x},\omega)=\nu   z(\boldsymbol{x},\omega), \quad \mathbf{b}(\boldsymbol{x},\omega)= \left(  z(\boldsymbol{x},\omega), z(\boldsymbol{x},\omega)\right)^T,  \quad \hbox{and} \quad  f(\boldsymbol{x})=0,
\]
where $\nu = 10^{-2}$ and $\overline{z}(\boldsymbol{x}) = 1$, i.e., mean of underlying random field. We note that the diffusion and convection terms are driven by the same random field. 

Figure~\ref{Ex3_mean} displays mean and variance of the SDG solutions obtained from Algorithm \ref{alg:adap} with the marking parameters 
$\theta_h=0.5$, $\theta_q =0.5$. From Figure~\ref{Ex3_mean} and the direction of velocity, we expect more refinement around the boundaries 
$x_1 = 1$, $x_2=1$, and the line in the direction of $(1,1)^T$; see Figure~\ref{Ex3_mesh} for the process of adaptively refined triangulations. 
Figure~\ref{Ex3_estimator}	 compares  the error estimators on the adaptively and uniformly generated spatial/parametric spaces and gives the progress of the effectivity indices as the iteration progress.

\begin{figure}[htp!]
	\centering
	\includegraphics[width=1\textwidth]{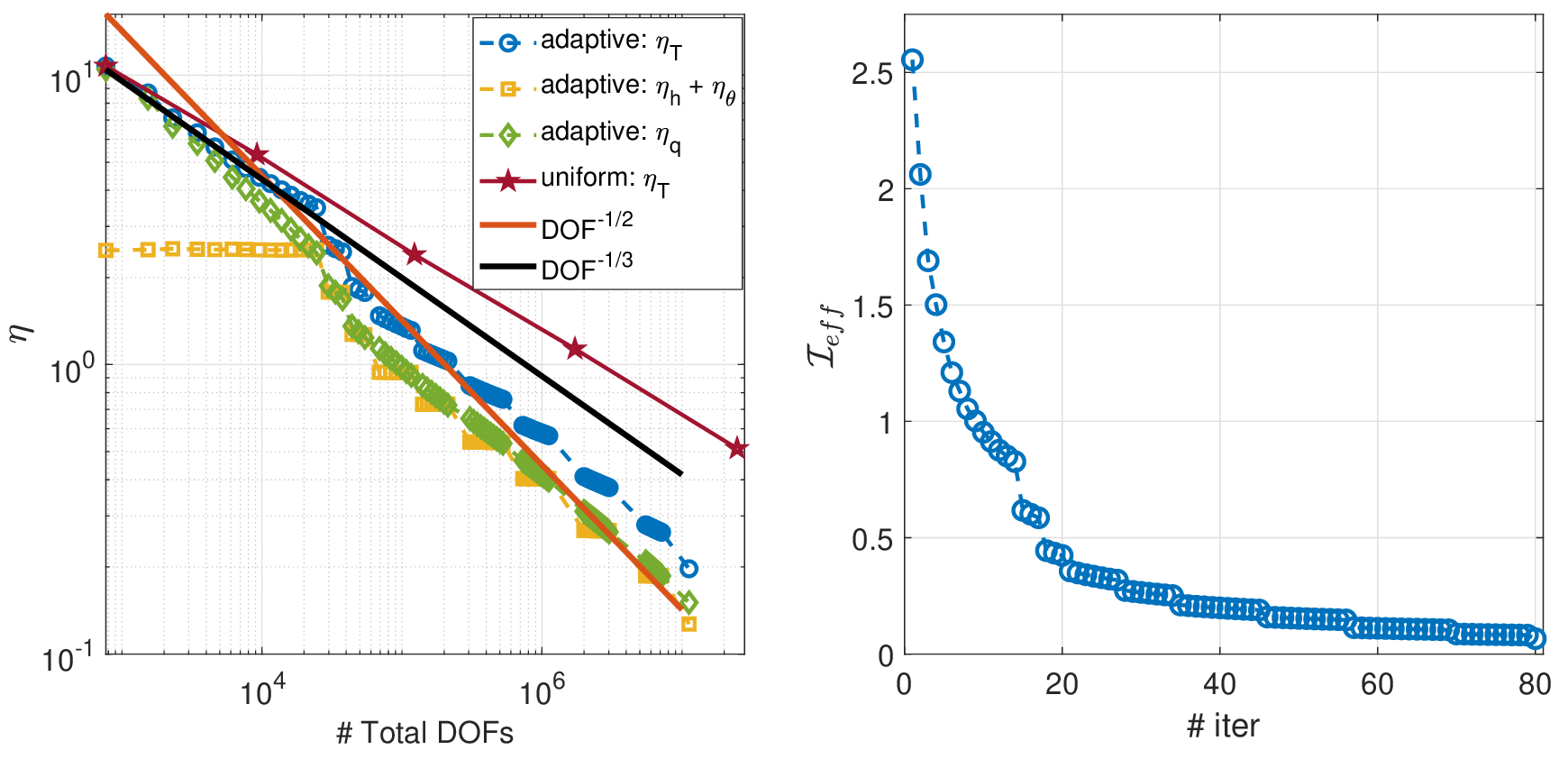}
	\caption{Example \ref{ex:rand_both}: Behaviour of error estimators (left) on the adaptively and uniformly generated spatial/parametric spaces and of effectivity indices with the marking parameters $\theta_h=0.5$, $\theta_q =0.5$. }
	\label{Ex3_estimator}		
\end{figure}


\subsubsection{Example with jump coefficient}\label{ex:rand_jump}
Last, we test our estimator $\eta_T$  \eqref{totalerror} on the random (jump) discontinuous coefficient.  Data of the problem is given by 
\[
\mathcal{D} = [-1,1]^2, \quad f(\boldsymbol{x})=0, \quad \hbox{and} \quad \mathbf{b}(\boldsymbol{x})=(0,1)^T
\]
with the nonhomogeneous Dirichlet boundary condition \eqref{nonDBC}. Stochastic jump diffusion coefficient  is  $a(\boldsymbol{x}, \omega) = \nu z(\boldsymbol{x}, \omega)$, where 
$\nu=10^{-2}$ is the viscosity parameter and $z(\boldsymbol{x}, \omega)$ is  a random variable having the mean as follows
\begin{align}
	\bar{z}(\boldsymbol{x}) = \begin{cases}
		10^4, & \hbox{if } \boldsymbol{x} \in \mathcal{D}_1,\\
		1,    & \hbox{if } \boldsymbol{x} \in \mathcal{D}_2,
	\end{cases}
\end{align}
where $\mathcal{D} = \mathcal{D}_1 \cup \mathcal{D}_2$. To observe the sensitivity of the adaptive algorithm with respect to the decomposition of the domain, we test the 
example with different choices, see also Figure~\ref{Ex4_domain}, as 
\begin{itemize}
	\item \emph{Half partition}: $\mathcal{D}_1 = [-1, 0] \times [-1,1]$ and $\mathcal{D}_2 = (0, 1] \times [-1,1]$,
	\item \emph{Rectangle partition}: $ \mathcal{D}_1 = \big( [-3/4,-1/3] \times [-3/4,-1/3] \big) \bigcup \big([1/3, 3/4] \times [-3/4, -1/3] \big) $ and 
	$\mathcal{D}_2 = \mathcal{D} \backslash \mathcal{D}_1$.
\end{itemize}

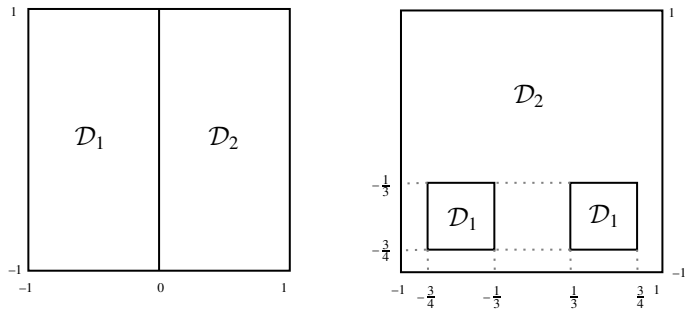
\begin{figure}[htp!]
	\centering
	\tikzset{every picture/.style={line width=0.75pt}} 
	\begin{tikzpicture}[x=0.75pt,y=0.75pt,yscale=-1,xscale=1,scale=0.75]
		
		\draw   (52,22) -- (226,22) -- (226,196) -- (52,196) -- cycle ;
		\draw    (139,22) -- (139,196) ;
		\draw   (300,23) -- (474,23) -- (474,197) -- (300,197) -- cycle ;
		\draw   (317.71,137.57) -- (362.14,137.57) -- (362.14,182) -- (317.71,182) -- cycle ;
		\draw   (412.71,137.57) -- (457.14,137.57) -- (457.14,182) -- (412.71,182) -- cycle ;
		\draw [color={rgb, 255:red, 128; green, 128; blue, 128 }  ,draw opacity=1 ] [dash pattern={on 0.84pt off 2.51pt}]  (317.71,182) -- (318,196.5) ;
		\draw [color={rgb, 255:red, 128; green, 128; blue, 128 }  ,draw opacity=1 ] [dash pattern={on 0.84pt off 2.51pt}]  (362.14,182) -- (362.43,196.5) ;
		\draw [color={rgb, 255:red, 128; green, 128; blue, 128 }  ,draw opacity=1 ] [dash pattern={on 0.84pt off 2.51pt}]  (412.71,182) -- (413,196.5) ;
		\draw [color={rgb, 255:red, 128; green, 128; blue, 128 }  ,draw opacity=1 ] [dash pattern={on 0.84pt off 2.51pt}]  (457.14,182) -- (457.43,196.5) ;
		\draw [color={rgb, 255:red, 128; green, 128; blue, 128 }  ,draw opacity=1 ] [dash pattern={on 0.84pt off 2.51pt}]  (316.71,182) -- (300,182.5) ;
		\draw [color={rgb, 255:red, 128; green, 128; blue, 128 }  ,draw opacity=1 ] [dash pattern={on 0.84pt off 2.51pt}]  (317.71,137.57) -- (301,138.07) ;
		\draw [color={rgb, 255:red, 128; green, 128; blue, 128 }  ,draw opacity=1 ] [dash pattern={on 0.84pt off 2.51pt}]  (412.71,182) -- (362.14,182) ;
		\draw [color={rgb, 255:red, 128; green, 128; blue, 128 }  ,draw opacity=1 ] [dash pattern={on 0.84pt off 2.51pt}]  (412.71,137.57) -- (362.14,137.57) ;

		\draw (44,202.4) node [anchor=north west][inner sep=0.75pt]  [font=\tiny]  {$-1$};
		\draw (136,202.4) node [anchor=north west][inner sep=0.75pt]  [font=\tiny]  {$0$};
		\draw (218,202.4) node [anchor=north west][inner sep=0.75pt]  [font=\tiny]  {$1$};
		\draw (38,18.4) node [anchor=north west][inner sep=0.75pt]  [font=\tiny]  {$1$};
		\draw (37,190.4) node [anchor=north west][inner sep=0.75pt]  [font=\tiny]  {$-1$};
		\draw (81,98.4) node [anchor=north west][inner sep=0.75pt]    {$\mathcal{D}_{1}$};
		\draw (170,98.4) node [anchor=north west][inner sep=0.75pt]    {$\mathcal{D}_{2}$};
		\draw (292,203.4) node [anchor=north west][inner sep=0.75pt]  [font=\tiny]  {$-1$};
		\draw (466,203.4) node [anchor=north west][inner sep=0.75pt]  [font=\tiny]  {$1$};
		\draw (476,19.4) node [anchor=north west][inner sep=0.75pt]  [font=\tiny]  {$1$};
		\draw (479,192.4) node [anchor=north west][inner sep=0.75pt]  [font=\tiny]  {$-1$};
		\draw (329,152.4) node [anchor=north west][inner sep=0.75pt]    {$\mathcal{D}_{1}$};
		\draw (423,150.4) node [anchor=north west][inner sep=0.75pt]    {$\mathcal{D}_{1}$};
		\draw (309,204.4) node [anchor=north west][inner sep=0.75pt]  [font=\tiny]  {$-\frac{3}{4}$};
		\draw (353,204.4) node [anchor=north west][inner sep=0.75pt]  [font=\tiny]  {$-\frac{1}{3}$};
		\draw (453,204.4) node [anchor=north west][inner sep=0.75pt]  [font=\tiny]  {$\frac{3}{4}$};
		\draw (409,204.4) node [anchor=north west][inner sep=0.75pt]  [font=\tiny]  {$\frac{1}{3}$};
		\draw (373,70.4) node [anchor=north west][inner sep=0.75pt]    {$\mathcal{D}_{2}$};
		\draw (279,173.4) node [anchor=north west][inner sep=0.75pt]  [font=\tiny]  {$-\frac{3}{4}$};
		\draw (279,130.4) node [anchor=north west][inner sep=0.75pt]  [font=\tiny]  {$-\frac{1}{3}$};
	\end{tikzpicture}
	\caption{Example~\ref{ex:rand_jump}: Half partition (left) and rectangle partition (right).} 
	\label{Ex4_domain}
\end{figure}

\begin{figure}[htp!]
	\centering
	\includegraphics[width=1\textwidth]{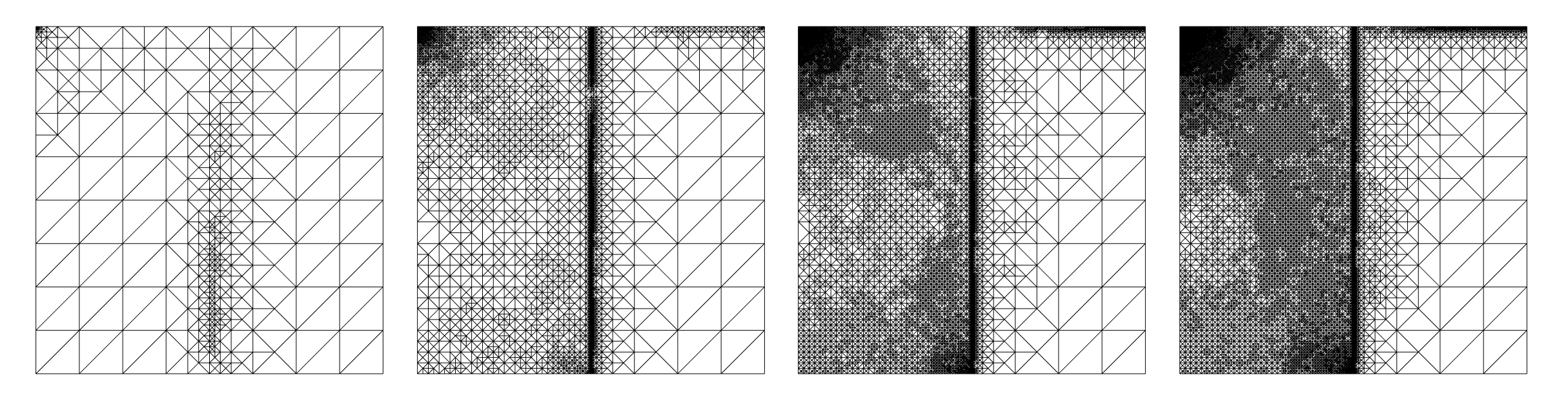} 
	\includegraphics[width=1\textwidth]{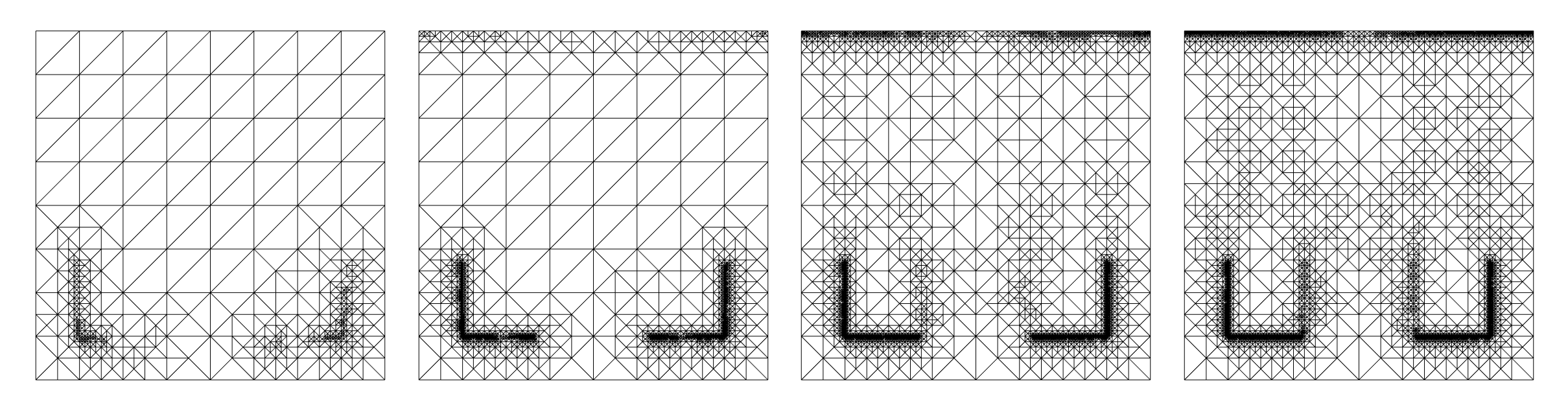} 
	\caption{Example~\ref{ex:rand_jump}: Process of adaptively refined triangulations obtained by Algorithm~\ref{alg:adap} with the marking parameters $\theta_h=0.5$, $\theta_q =0.5$ 
		for half partition (top) and rectangle partition (bottom).} \label{Ex4_mesh}	
\end{figure}

\begin{figure}[htp!]
	\centering
	\includegraphics[width=0.90\textwidth]{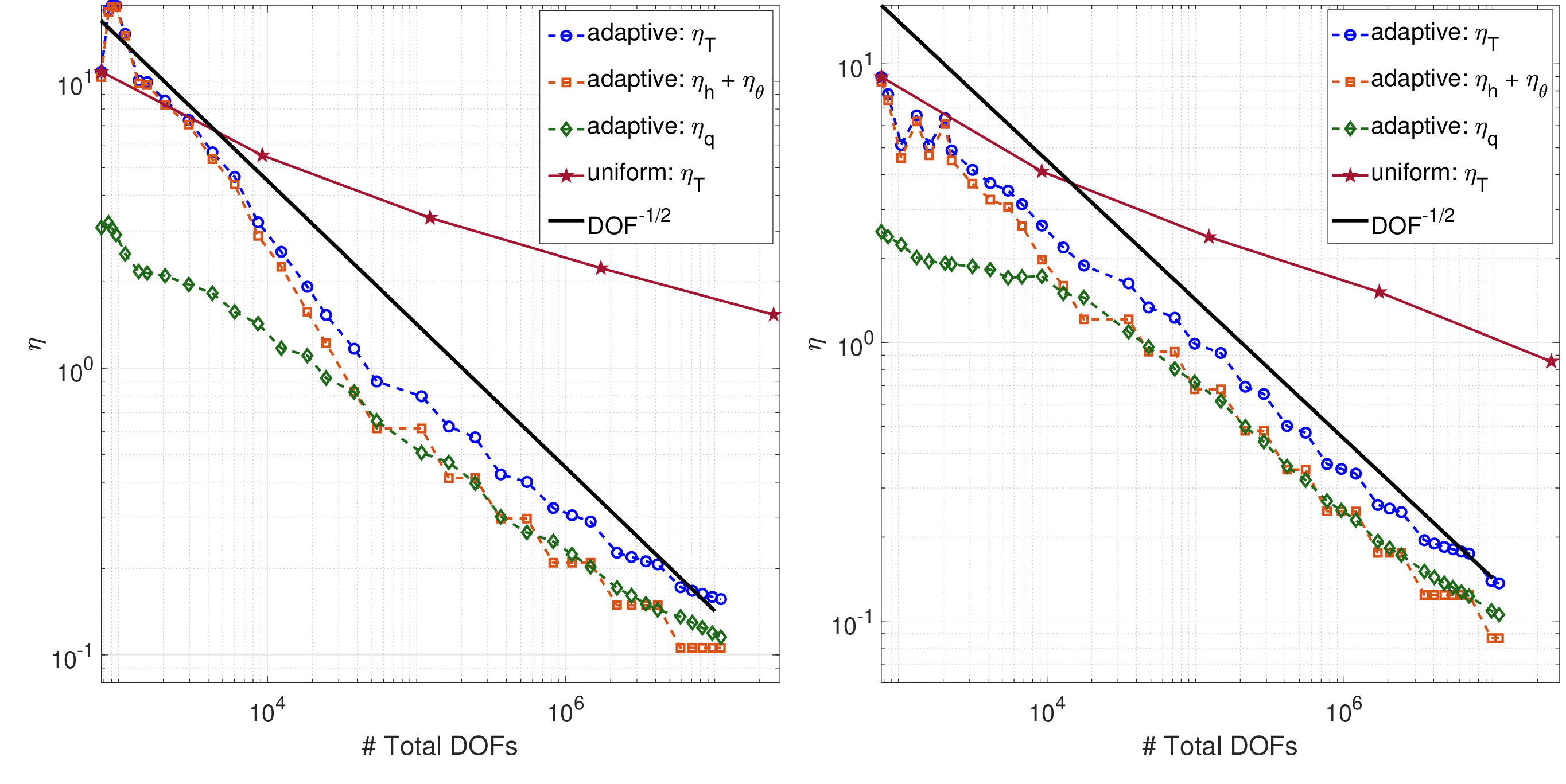} 
	\caption{Example~\ref{ex:rand_jump}: Behaviour of error estimators  on the adaptively and uniformly generated spatial/parametric spaces with the marking parameters $\theta_h=0.5$, 
		$\theta_q =0.5$ for half partition (left) and rectangle partition (right).} \label{Ex4_estimator}	
\end{figure} 
Figure~\ref{Ex4_mesh} displays the process of adaptively refined triangulations obtained by Algorithm~\ref{alg:adap} with the marking parameters $\theta_h=0.5$, $\theta_q =0.5$ 
for both case. Our estimator detects the changes in the mean of random variable well. Behavior of error estimator is also given in Figure~\ref{Ex4_estimator} by comparing the results 
on the adaptively and uniformly refined/enriched spaces. As expected, the results on adaptive ones yield better performance.

\section{Conclusion}\label{sec:conc}
In this paper, we have proposed an adaptive approach to approximate the statistical moments of convection dominated PDEs with random coefficients. 
We  have derived a residual--based estimator, consisting of spatial, data, 
and parametric contributions. Adding the truncation error coming from KL expansion, we have obtained a balance between the errors emerged from spatial and (stochastic) parametric domains. 
Different types of benchmark examples are tested and numerical simulations show that the proposed error estimator detects the localized regions well, where unwanted oscillations are displayed, and 
the adaptive approach displays 
a better behaviour than the uniform approach. Although the effectivity index tends to a constant as the iteration progress, it is not around the unity and therefore needs to 
be explored in the upcoming works. As a future study, the findings can be also extended to more complex  problems such as nonlinear PDEs and PDE-constrained optimization 
problems, and to different types of randomness such as boundary conditions and geometry.

\section*{Declaration of competing interest}
The authors declare that they have no known competing financial interests or personal relationships that could have appeared to influence the work reported in this paper.

\section*{Data availability}
No data was used for the research described in the article.

\section*{Acknowledgements}
This work has been supported by Scientific Research Projects Coordination Unit of Middle East Technical University under grant number GAP-705-2022-10821. HY acknowledges support from the BAGEP 2022 Award of the Science Academy. The authors also would like to express their sincere thanks to the anonymous referees for their most valuable suggestions.



\begin{thebibliography}{10}
	\providecommand{\url}[1]{\texttt{#1}}
	\providecommand{\urlprefix}{URL }
	\expandafter\ifx\csname urlstyle\endcsname\relax
	\providecommand{\doi}[1]{doi:\discretionary{}{}{}#1}\else
	\providecommand{\doi}{doi:\discretionary{}{}{}\begingroup
		\urlstyle{rm}\Url}\fi
	
	\bibitem{MAinsworth_JTOden_2000a}
	M.~Ainsworth and J.~T. Oden, \textit{A posteriori error estimation in finite
		element analysis}, Pure and Applied Mathematics, John Wiley \& Sons, New
	York, 2000.
	
	\bibitem{DNArnold_FBrezzi_BCockburn_LDMarini_2002a}
	D.~N. Arnold, F.~Brezzi, B.~Cockburn, and L.~D. Marini, \textit{Unified
		analysis of discontinuous {G}alerkin methods for elliptic problems}, SIAM J.
	Numer. Anal. \textbf{39} (2002), no.~5, 1749--1779.
	
	\bibitem{IBabuska_PChatzipantelidis_2002a}
	I.~Babu{\v{s}}ka and P.~Chatzipantelidis, \textit{On solving elliptic
		stochastic partial differential equations}, Comput. Methods Appl. Mech.
	Engrg. \textbf{191} (2002), no. 37-38, 4093--4122.
	
	\bibitem{IBabuska_FNobile_RTempone_2007a}
	I.~Babu{\v{s}}ka, F.~Nobile, and R.~Tempone, \textit{A stochastic collocation
		method for elliptic partial differential equations with random input data},
	SIAM J. Numer. Anal. \textbf{45} (2007), no.~3, 1005--1034.
	
	\bibitem{IBabuska_WCRheinboldt_1978a}
	I.~Babu{\v{s}}ka and W.~C. Rheinboldt, \textit{Error estimates for adaptive
		finite element computations}, SIAM J. Numer. Anal. \textbf{15} (1978),
	736--754.
	
	\bibitem{IBabuska_RTempone_GEZouraris_2004a}
	I.~Babu{\v{s}}ka, R.~Tempone, and G.~E. Zouraris, \textit{{G}alerkin finite
		element approximations of stochastic elliptic partial differential
		equations}, SIAM J. Numer. Anal. \textbf{42} (2004), no.~2, 800--825.
	
	\bibitem{ABarth_CSchwab_NZollinger_2011}
	A.~Barth, C.~Schwab, and N.~Zollinger, \textit{Multi--level {M}onte {C}arlo
		finite element method for elliptic {PDEs} with stochastic coefficients},
	Numer. Math. \textbf{119} (2011), 123--161.
	
	\bibitem{ABarth_AStein_2018}
	A.~Barth and A.~Stein, \textit{A study of elliptic partial differential
		equations with jump diffusion coefficients}, SIAM/ASA J. Uncertain. Quantif.
	\textbf{56} (2018), 1707--1743.
	
	\bibitem{ABarth_AStein_2022}
	A.~Barth and A.~Stein, \textit{Numerical analysis for time-dependent advection
		diffusion problems with random discontinuous coefficients}, ESAIM: M2AN
	\textbf{56} (2022), no.~5, 1545--1578.
	
	\bibitem{ABespalov_CEPowell_DSilvester_2014}
	A.~Bespalov, C.~E. Powell, and D.~Silvester, \textit{Energy norm a posteriori
		error estimation for parametric operator equations}, {SIAM} J. Sci. Comput.
	\textbf{36} (2014), no.~2, A339--A363.
	
	\bibitem{ABespalov_DPraetorius_MGuggeri_2022}
	A.~Bespalov, D.~Praetorius, and M.~Guggeri, \textit{Convergence and rate
		optimality of adaptive multilevel stochastic {G}alerkin {FEM}}, IMA J. Numer.
	Anal. \textbf{42} (2022), 2190--2213.
	
	\bibitem{ABespalov_LRocchi_2018}
	A.~Bespalov and L.~Rocchi, \textit{Efficient adaptive algorithms for elliptic
		{PDE}s with random data}, SIAM-ASA J. Uncertain. Quantif. \textbf{6} (2018),
	no.~1, 243--272.
	
	\bibitem{ABespalov_DSilvester_2016}
	A.~Bespalov and D.~Silvester, \textit{Efficient adaptive stochastic {G}alerkin
		methods for parametric operator equations}, SIAM J. Sci. Comput. \textbf{38}
	(2016), no.~4, A2118--A2140.
	
	\bibitem{ABespalov_DPraetorius_LRocchi_MGuggeri_2019b}
	A.~Bespalov, D.~Praetorius, L.~Rocchi, and M.~Guggeri, \textit{Convergence of
		adaptive stochastic {G}alerkin {FEM}}, SIAM J. Numer. Anal. \textbf{57}
	(2019), no.~5, 2359--2382.
	
	\bibitem{ABespalov_DPraetorius_LRocchi_MGuggeri_2019}
	A.~Bespalov, D.~Praetorius, L.~Rocchi, and M.~Guggeri, \textit{Goal--oriented
		error estimaton and adaptivity for ellpitic {PDEs} with parametric or
		uncertain inputs}, Comput. Methods Appl. Mech. Engrg. \textbf{345} (2019),
	951--982.
	
	\bibitem{ZCai_SZhang_2009}
	Z.~Cai and S.~Zhang, \textit{Recovery--based error estimator for interface
		problems: conforming linear elements}, SIAM J. Numer. Anal. \textbf{47}
	(2009), 2132--2156.
	
	\bibitem{PCiloglu_HYucel_2022}
	P.~\c{C}\.{i}lo\u{g}lu and H.~Y\"ucel, \textit{Stochastic discontinuous
		{G}alerkin methods with low--rank solvers for convection diffusion
		equations}, Appl. Numer. Math. \textbf{172} (2022), 157--185.
	
	\bibitem{PCiloglu_HYucel_2023}
	P.~\c{C}\.{i}lo\u{g}lu and H.~Y\"ucel, \textit{Stochastic discontinuous
		{G}alerkin methods for robust deterministic control of convection diffusion
		equations with uncertain coefficients}, Adv. Comput. Math. \textbf{49, 16}
	(2023).
	
	\bibitem{PGCiarlet_2002}
	P.~G. Ciarlet, \textit{The Finite Element Method for Elliptic Problems},
	\textit{Classics Appl. Math.}, vol.~40, SIAM, Philadelphia, PA, 2002.
	
	\bibitem{KACliffe_MBGiles_RScheichl_ALTeckentrup_2011}
	K.~A. Cliffe, M.~B. Giles, R.~Scheichl, and A.~L. Teckentrup,
	\textit{Multilevel {M}onte {C}arlo methods and applications to elliptic
		{PDEs} with random coeffcients}, Comput. Vis. Sci. \textbf{14} (2011), 3--15.
	
	\bibitem{AJCrowder_CEPowell_ABespalov_2019}
	A.~Crowder, C.~E. Powell, and A.~Bespalov, \textit{Efficient adaptive
		multilevel stochastic {G}alerkin approximation using implicit a posteriori
		error estimation}, SIAM J. Sci. Comput. \textbf{41} (2019), A1681--A1705.
	
	\bibitem{JPDelhomme_1979}
	J.~P. Delhomme, \textit{Spatial variability and uncertainity in groundwater
		flow parameters: {A} geostatistical approach}, Water Resourc. Res.  (1979),
	269--280.
	
	\bibitem{MEigel_CMerdon_2016}
	M.~Eigel and C.~Merdon, \textit{Local equilibration error estimators for
		guaranteed error control in adaptive stochastic higher--order {G}alerkin
		finite elements}, SIAM-ASA J. Uncertain. Quantif. \textbf{4} (2016),
	132--1397.
	
	\bibitem{MEigel_CJGittelson_CSchwab_EZander_2014}
	M.~Eigel, C.~J. Gittelson, C.~Schwab, and E.~Zander, \textit{Adaptive
		stochastic {G}alerkin {FEM}}, Comput. Methods Appl. Mech. Eng. \textbf{270}
	(2014), 247--269.
	
	\bibitem{MEigel_CJGittelson_CSchwab_EZander_2015}
	M.~Eigel, C.~J. Gittelson, C.~Schwab, and E.~Zander, \textit{A convergent
		adaptive stochastic {G}alerkin finite element method with quasi--optimal
		spatial meshes}, ESAIM: M2AN \textbf{49} (2015), no.~5, 1367--1398.
	
	\bibitem{MEigel_MMarschall_MPfeffer_RSchneider_2020}
	M.~Eigel, M.~Marschall, M.~Pfeffer, and R.~Schneider, \textit{Adaptive
		stochastic {G}alerkin {FEM} for lognormal coefficients in hierarchical tensor
		representations}, Numer. Math. \textbf{145} (2020), 655--692.
	
	\bibitem{OGErnst_CEPowell_DJSilvester_EUllmann_2009a}
	O.~G. Ernst, C.~E. Powell, D.~J. Silvester, and E.~Ullmann, \textit{Efficient
		solvers for a linear stochastic {G}alerkin mixed formulation of diffusion
		problems with random data}, SIAM J. Sci. Comput. \textbf{31} (2009), no.~2,
	1424--1447.
	
	\bibitem{CJGittelson_2011}
	C.~J. Gittelson, \textit{An adaptive stochastic {G}alerkin method}, Math. Comp.
	\textbf{82} (2011), 1515--1541.
	
	\bibitem{IGGraham_FYKuo_DNuyens_RScheichl_IHSloan_2011}
	I.~G. Graham, F.~Y. Kuo, D.~Nuyens, R.~Scheichl, and I.~H. Sloan,
	\textit{Quasi--{M}onte {C}arlo methods for elliptic {PDEs} with random
		coefficients and applications}, J. Comput. Phys. \textbf{230} (2011),
	3668--3694.
	
	\bibitem{PHouston_DSchotzau_TPWihler_2007}
	P.~Houston, D.~Sch\"otzau, and T.~P. Wihler, \textit{Energy norm a-posteriori
		error estimation of hp-adaptive discontinuous {G}alerkin methods for elliptic
		problems}, Math. Models Methods Appl. Sci. \textbf{17} (2007), 33--62.
	
	\bibitem{OAKarakashian_FPascal_2007}
	O.~A. Karakashian and F.~Pascal, \textit{Convergence of adaptive discontinuous
		{G}alerkin approximations of second-order elliptic problems}, SIAM J. Numer.
	Anal. \textbf{45} (2007), no.~2, 641--665.
	
	\bibitem{KKarhunen_1947}
	K.~Karhunen, \textit{\"{U}ber lineare {M}ethoden in der
		{W}ahrscheinlichkeitsrechnung}, Ann. Acad. Sci. Fennicae. Ser. A. I.
	Math.-Phys. \textbf{1947} (1947), no.~37, 79.
	
	\bibitem{MKleiber_TDHien_1992}
	M.~Kleiber and T.~D. Hien, \textit{The Stochastic Finite Element Method:
		{B}asic perturbation technique and computer implementation}, Wiley, 1992.
	
	\bibitem{RKoekoek_PALesky_2010}
	R.~Koekoek and P.~A. Lesky, \textit{Hypergeometric Orthogonal Polynomials and
		Their q--Analogues}, Springer-Verlag, 2010.
	
	\bibitem{KLee_HCElman_2017}
	K.~Lee and H.~C. Elman, \textit{A preconditioned low-rank projection method
		with a rank-reduction scheme for stochastic partial differential equations},
	SIAM J. Sci. Comput. \textbf{39} (2017), no.~5, S828--S850.
	
	\bibitem{JLi_XWang_KZhang_2016}
	J.~Li, X.~Wang, and K.~Zhang, \textit{Multilevel {M}onte {C}arlo weak
		{G}alerkin method for elliptic equations with stochastic jump coefficients},
	Appl. Math. Comput. \textbf{275} (2016), 181--194.
	
	\bibitem{MLoeve_1946}
	M.~Lo{\`e}ve, \textit{Fonctions al\'eatoires de second ordre}, Revue Sci.
	\textbf{84} (1946), 195--206.
	
	\bibitem{GJLord_CEPowell_TShardlow_2014}
	G.~J. Lord, C.~E. Powell, and T.~Shardlow, \textit{An Introduction to
		Computational Stochastic {PDEs}}, Cambridge University Press, New York, 2014.
	
	\bibitem{SMarkov_BCheng_AAsenov_2012}
	S.~Markov, B.~Cheng, and A.~Asenov, \textit{Statistical variability in fully
		depleted {SOI MOSFET}s due to random dopant fluctuations in the source and
		drain extensions}, IEEE Electron Device Lett. \textbf{33} (2012), no.~3,
	315--317.
	
	\bibitem{LMathelin_OLeMaitre_2007}
	L.~Mathelin and O.~L. Ma\^{\i}tre, \textit{Dual--based a posteriori error
		estimate for stochastic finite element methods}, Commun. Appl. Math. Comput.
	Sci. \textbf{2} (2007), 83--115.
	
	\bibitem{FNobile_RTempone_CGWebster_2008a}
	F.~Nobile, R.~Tempone, and C.~G. Webster, \textit{A sparse grid stochastic
		collocation method for partial differential equations with random input
		data}, SIAM J. Numer. Anal. \textbf{46} (2008), no.~5, 2309--2345.
	
	\bibitem{BOksendal_2003}
	B.~{\O}ksendal, \textit{Stochastic Differential Equations}, Springer-Verlag,
	Berlin, 2003.
	
	\bibitem{CEPowell_HCElman_2009}
	C.~E. Powell and H.~C. Elman, \textit{Block--diagonal preconditioning for
		spectral stochastic finite--element systems}, IMA J. Numer. Anal. \textbf{29}
	(2009), no.~2, 350--375.
	
	\bibitem{BRiviere_2008a}
	B.~Rivi{\`e}re, \textit{Discontinuous {G}alerkin Methods for Solving Elliptic
		and Parabolic Equations. {T}heory and Implementation}, Frontiers in Applied
	Mathematics, SIAM, Philadelphia, 2008.
	
	\bibitem{DSchotzau_LZhu_2009a}
	D.~Sch{\"o}tzau and L.~Zhu, \textit{A robust a-posteriori error estimator for
		discontinuous {G}alerkin methods for convection-diffusion equations}, Appl.
	Numer. Math. \textbf{59} (2009), no.~9, 2236--2255.
	
	\bibitem{CSchwab_CJGittelson_2011a}
	C.~Schwab and C.~J. Gittelson, \textit{Sparse tensor discretizations of
		high-dimensional parametric and stochastic {PDE}s}, Acta Numer. \textbf{20}
	(2011), 291--467.
	
	\bibitem{LRScott_SZhang_1990a}
	L.~R. Scott and S.~Zhang, \textit{Finite element interpolation of nonsmooth
		functions satisfying boundary conditions}, Math. Comp. \textbf{54} (1990),
	483--493.
	
	\bibitem{RVerfurth_1996}
	R.~Verf{\"u}rth, \textit{A Review of {\it A Posteriori} Error Estimation and
		Adaptive Mesh--Refinement Techniques}, Wiley-Teubner Series: Advances in
	Numerical Mathematics, Wiley Teubner, Chicester, New York, Stuttgart, 1996.
	
	\bibitem{XWan_GEKarniadakis_2009}
	X.~Wan and G.~E. Karniadakis, \textit{Error control in multi--element
		generalized polynomial choas method for elliptic problems with random
		coefficients}, Commun. Comput. Phys. \textbf{5} (2009), 793--820.
	
	\bibitem{DXiu_GEKarniadakis_2002}
	D.~Xiu and G.~E. Karniadakis, \textit{The {W}iener--{A}skey polynomial chaos
		for stochastic differential equations}, SIAM J. Sci. Comput. \textbf{24(2)}
	(2002), 619–644.
	
	\bibitem{DZhang_QKang_2004}
	D.~Zhang and Q.~Kang, \textit{Pore scale simulation of solute transport in
		fractured porous media}, Geophys. Res. Lett. \textbf{31} (2004), L12504.
	
\end{thebibliography}

\end{document}